\newtheorem{thm1}{Theorem}
\newtheorem{thm}{Theorem} [section]
\newtheorem{cor}[thm]{Corollary}
\newtheorem{lem}[thm]{Lemma}
\newtheorem{prop}[thm]{Proposition}
\theoremstyle{definition}
\theoremstyle{remark}
\numberwithin{equation}{section}
\begin{document}

\newcommand{\thmref}[1]{Theorem~\ref{#1}}
\newcommand{\secref}[1]{Section~\ref{#1}}
\newcommand{\lemref}[1]{Lemma~\ref{#1}}
\newcommand{\propref}[1]{Proposition~\ref{#1}}
\newcommand{\corref}[1]{Corollary~\ref{#1}}
\newcommand{\remref}[1]{Remark~\ref{#1}}
\newcommand{\eqnref}[1]{(\ref{#1})}

\newcommand{\exref}[1]{Example~\ref{#1}}

\newcommand{\nc}{\newcommand}
 \nc{\Z}{{\mathbb Z}}
 \nc{\C}{{\mathbb C}}
 \nc{\N}{{\mathbb N}}
 \nc{\F}{{\mf F}}
 \nc{\Q}{\mathbb{Q}}
 \nc{\la}{\lambda}
 \nc{\ep}{\epsilon}
 \nc{\h}{\mathfrak h}
 \nc{\n}{\mf n}
 \nc{\G}{{\mathfrak g}}
 \nc{\DG}{\widetilde{\mathfrak g}}
 \nc{\SG}{\breve{\mathfrak g}}
 \nc{\is}{{\mathbf i}}
 \nc{\V}{\mf V}
 \nc{\bi}{\bibitem}
 \nc{\E}{\mc E}
 \nc{\ba}{\tilde{\pa}}
 \nc{\half}{\frac{1}{2}}
 \nc{\hgt}{\text{ht}}
 \nc{\mc}{\mathcal}
 \nc{\mf}{\mathfrak} 
 \nc{\hf}{\frac{1}{2}}
\nc{\ov}{\overline}
\nc{\ul}{\underline}
\nc{\I}{\mathbb{I}}
\nc{\aaa}{{\mf A}}
\nc{\xx}{{\mf x}}
\nc{\id}{\text{id}}
\nc{\one}{\bold{1}}
\nc{\Qq}{\Q(q)}
\nc{\mA}{\mathcal{A}}
\nc{\mH}{\mathcal{H}}
\nc{\bk}{\mathbb{K}}

\newcommand{\blue}[1]{{\color{blue}#1}}
\newcommand{\red}[1]{{\color{red}#1}}
\newcommand{\green}[1]{{\color{green}#1}}
\newcommand{\white}[1]{{\color{white}#1}}

\newcommand{\htodo}{\todo[inline,color=orange!20, caption={}]}

\title[A categorification of Hecke algebras with parameters $1$ and $v$]
{A categorification of Hecke algebras with parameters $1$ and $v$}
 
 \author[Huanchen Bao]{Huanchen Bao}
\address{Department of Mathematics, University of Maryland, College Park, MD 20742}
\email{huanchen@math.umd.edu}

\begin{abstract}
We categorify the Hecke algebra with parameters $1$ and $v$ using a variation of the category of Soergel bimodules.

\end{abstract}

\maketitle

\let\thefootnote\relax\footnotetext{{\em 2010 Mathematics Subject Classification.} Primary 17B10.}




\section*{Introduction}

\subsection{} 
Let $(W, S)$ be a Coxeter system, where $S$ denotes the set of simple reflections. Let $m_{s_1,s_2} \in \N \cup \{\infty\}$, such that $(s_1 s_2)^{m_{s_1,s_2}} =1 $ for $s_1, s_2 \in S$. We denote by $\ell(\cdot)$ the length function on $W$. Let $T = \bigcup_{w \in W} w S w^{-1}$ be the set of reflections. We denote by $\le $ the Bruhat order on $W$. 
We denote by $V$ (over $\mathbb{R}$) the geometric representation of $W$ and denote the root system by $\Phi = \Phi^+ \cup \Phi^-$ of $W$ in the sense of \cite[Section~5.4]{Hum90}. 
Let  $\{\alpha_s \vert s \in S \}$ be the collection of simple roots. Let $n (w) = \text{Card}( \Phi^+ \cap w(\Phi^-))$. We know $n(w) = \ell(w)$ for any $w \in W$. 

A weight function $L : W \rightarrow \Z$ is a function on $W$ such that $L(w_1w_2)= L(w_1) + L(w_2)$ whenever $\ell(w_1 w_2) = \ell(w_1) + \ell(w_2)$ for $w_1, w_2 \in W$. It follows that $L(s_1) = L(s_2)$ for any $s_1, s_2 \in S$ such that $m_{s_1,s_2}$ is odd.

In this note, we assume that a weight function $L$ is fixed such that 
\[
L(s)=0 \text{ or } 1, \quad \text{ for } s \in S.
\]
We define $S_{e} = S \cap L^{-1}(e)$ for $e \in \{0,1\}$. The only interesting case in this paper is when $L$ is not constant. 
\subsection{}
Let $\mA = \Z[v,v^{-1}]$ with a generic parameter $v$. For $s \in S$, we set $v_s= v^{L(s)} \in \mA$.  Let $\mathcal{H} = \mH_W$ be the $\mA$-algebra generated by $T_s (s \in S)$ subject to the relations:
\begin{align*}
(T_s - v^{-1}_s)(T_s+ v_s) &= 0, \quad \text{ for }s \in S\\
T_s T_{s'} T_s \cdots &= T_{s'} T_s T_{s'}\cdots, 
\end{align*}
where both products in the second relation have $m_{s,s'}$ factors for any $s \neq s' \in S$ such that $m_{s,s'}\neq \infty$. We write 
\begin{equation}\label{eq:Tw}
T_w = T_{s_1} \cdots T_{s_n}, \quad \text{ for any reduced expression }w =s_1 \cdots s_n \text{ with }s_i \in S.
\end{equation}
The set $\{T_w \vert w \in W\}$ forms an $\mA$-basis of $\mH$.
Let $\bar{\,} : \mH \rightarrow \mH$ be the $\mA$-semilinear bar involution such that $\overline{T_s}= T_s^{-1}$ and $\overline{v} =v^{-1}$. Note that for any $s \in S_0$, we have $T_s^2 =1$ and $\overline{T_s} = T_s$.

Thanks to \cite[Chap~5]{Lu03}, for any $w \in W$, there is a unique element $c_w$ such that
\begin{enumerate}
	\item	$\overline{c_w} = c_w$;
	\item	$c_w = \sum_{y \in W} p_{y,w} T_y$ where
		\begin{itemize}	
			\item	$p_{y,w} = 0$ unless $y \le w$;
			\item	$p_{w,w} =1$;
			\item	$p_{y,w} \in v \Z[v]$ if $y < w$.
		\end{itemize}
\end{enumerate}
The set $\{c_w \vert w \in W\}$ forms an $\mA$-basis of $\mH$, called the canonical (or Kazhdan-Lusztig) basis.

\subsection{}
Soergel (\cite{Soe07}) categorified the Hecke algebras with equal parameters (that is, $L(s) =1$ for all $s \in S$) in terms of the category of Soergel bimodules. 

Following \cite{EW14}, we fix a Soergel realization $(\mathfrak{h}, \{\alpha_s\}, \{\alpha^\vee_{s}\})$ of $(W,S)$ over $\mathbb{R}$. This realization is faithful and Soergel's techniques can be applied. Let $R = \bigoplus_{m \ge 0} S^m(\mathfrak{h}^*)$, which we view as a graded $\mathbb{R}$-algebra with deg$(\mathfrak{h}^*) =2$. For $s \in S$, we define the graded $R$-bimodule $B_s = R \otimes _{R^s} R [1]$, where $[1]$ denotes the degree shifting. For any $w \in W$, we denote the standard bimodule associated with $w$ by $R_w$. Recall $R_w$ is isomorphic to $R$ as $\mathbb{R}$-modules and the $R$-bimodule structure is defined as : $f \cdot a = a \cdot w(f)$ for $f\in R$ and $a \in R_{w}$. 
 
 Let $\mathbb{BS}$Bim denote the full monoidal subcategory of $R$-Bim whose objects are Bott-Samelson bimodules. Let $\mathbb{S}$Bim denote the Karoubi envelope of $\mathbb{BS}$Bim, which is nowaday called the category of Soergel bimodules. Following \cite{Soe07, EW14}, we know $\mathbb{S}$Bim categorifies the Hecke algebra $\mc{H}$ of equal parameters. We have an algebra homomorphism from the Grothendieck group [$\mathbb{S}$Bim] to the Hecke algebra $\mc{H}$, where the images of the indecomposable objects are the canonical basis elements up to degree shift.

\subsection{}\label{subset:def}

 Now let $\mathbb{BS}$Bim$^L$ be the full monoidal subcategory of $R$-Bim generated by $R_{s}$ ($s \in S_0$) and $B_{s'}$ ($s' \in S_1$). For any expression $\underline{w}= s_{i_1} \cdots  s_{i_n} \in W$, we define the bimodule $B^L_{\underline{w}}$ as a product of $R_{s}$ ($s \in S_0$) and $B_{s'}$ ($s' \in S_1$) following the prescribed expression $\underline{w}$. It is easy to see that any objects in this category admits both standard and costandard filtrations (c.f. \cite{Soe07}). Hence we can define the character of any object in $\mathbb{BS}$Bim$^L$, denoted by $ch$. (Note that the definition of the character map makes sense as long as we have the standard basis $\{T_w \in \mc{H}\vert w \in W\}$ despite different multiplicative structure of $\mc{H}$ versus \cite{Soe07}.)

We denote by $\mathbb{S}$Bim$^L$ the Karoubi envelope of $\mathbb{BS}$Bim$^L$. We prove the following theorem in this note (which follows from Proposition~\ref{prop:1} and Proposition~\ref{prop:2}).

\begin{thm1}
For any $w \in W$, there exists a unique indecomposable bimodule (up to isomorphism) $B^L_w$ which occurs as a summand of $B^L_{\underline{w}}$ for any reduced expression $\underline{w}$ of $w$ such that $R_w[\nu]$ occurs in its standard filtration. The set $\{B^L_w [\nu] \vert w \in W, \nu \in \Z\}$ gives a complete list of indecomposable bimodules in $\mathbb{S}\text{Bim}^L$.

There is a unique isomorphism of $\mA$-algebras 
\begin{align*}
	\varepsilon: \mH & \longrightarrow [\mathbb{S}\text{Bim}^L],\\
					c_w & \mapsto B^L_w.
\end{align*}
The inverse of $\varepsilon$ is given by the character map $ch : [\mathbb{S}\text{Bim}^L] \rightarrow \mH$. 

\end{thm1}

\subsection{}In the paper \cite{GT17}, Gobet and Thiel studied the generalized category of Soergel bimodules, with focus on type $A_2$. The category $\mathbb{S}\text{Bim}^L$ we constructed here is a subcategory of their category $\mc{C}$.

{\bf Acknowledgement:} The author would like to thank Xuhua He for helpful comments.


\section{Proof of the theorem}
\subsection{Coxeter groups}
In this section we review basics of Coxeter groups and their reflection subgroups. We refer to \cite{Hum90} for more details. Let $e \in \{0,1\}$ and $w \in W$. We define
 \begin{align*}
 &S_{e} = S \cap L^{-1}(e), \quad T_e = \bigcup_{w \in W} w S_e w^{-1},
\quad  \Phi_e = \{w(\alpha_s) \in V \vert w \in W, s \in S_e\},\\
 &\Phi_e^{\pm} = \Phi_e \cap \Phi^{\pm}, \quad n_e (w) =  \text{Card} \{\alpha \in  \Phi_e^+ \cap w(\Phi_e^-) \}. 
 \end{align*}
 Let $W_{S_0}$ be the parabolic subgroup of $W$ generated by $s \in S_0$. We have $T_0 \cap T_1 = S_0 \cap S_1 = \emptyset$. We are interested in the reflection subgroup $W' =\langle T_1 \rangle$ of $W$.

\begin{prop}\cite{Dy90, De89}\label{prop:Dyer}
The subgroup $W'$ of $W$ is itself a Coxeter group with simple reflections $S' = \{w \in T_1 \vert n_1(w) =1\}$. Moreover the restriction of $n_1$ on $W'$ coincides with the (new) length function of $(W', S')$.
\end{prop}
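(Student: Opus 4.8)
The statement is essentially the Dyer--Deodhar theorem on reflection subgroups, specialized to $W' = \langle T_1\rangle$, so the plan is to recall and adapt their construction rather than to invent anything new. The key object is the set $\Phi_1$ of roots associated to the reflections in $T_1$. The overall strategy is to produce a \emph{root-system-like} structure inside $\Phi_1$ on which $W'$ acts, and then to read off the Coxeter generators as those reflections whose positive root cannot be decomposed further.

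\emph{First}, I would fix notation by identifying each reflection $t \in T_1$ with its positive root $\alpha_t \in \Phi_1^+$, so that $W'$ permutes $\Phi_1 = \Phi_1^+ \cup \Phi_1^-$. The main technical tool is the function $n_1$: since $n_1(w) = \mathrm{Card}(\Phi_1^+ \cap w(\Phi_1^-))$ counts the positive roots in $\Phi_1$ sent to negative ones, one checks directly that for $w \in W'$ and $t \in T_1$ the usual cocycle-type identity
\[
n_1(wt) = n_1(w) \pm 1
\]
holds, with the sign governed by whether $w(\alpha_t)$ is positive or negative. This is the $\Phi_1$-analogue of the standard length computation in $W$, and it is the workhorse for everything that follows.

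\emph{Second}, I would \emph{define} $S' = \{t \in T_1 \mid n_1(t) = 1\}$ to be the candidate simple reflections and show that $W'$ is generated by $S'$. The idea is an induction on $n_1(w)$: given $w \in W'$ with $n_1(w) > 0$, there exists a reflection $t \in T_1$ with $w(\alpha_t) \in \Phi_1^-$ that is ``minimal'' in the sense that $\alpha_t \in S'$; multiplying by such a $t$ strictly decreases $n_1$, and repeating expresses $w$ as a product of elements of $S'$ whose length in the generating set equals $n_1(w)$. The existence of the required simple $t$ is where Dyer's construction of a positive system for $W'$ enters: one takes $\alpha_t$ to be an ``indecomposable'' element of $\Phi_1^+ \cap w^{-1}(\Phi_1^-)$.

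\emph{Third}, having shown $W' = \langle S'\rangle$ with $n_1$ subadditive and changing by $\pm 1$ under right multiplication by $S'$, I would invoke the standard characterization of Coxeter systems (the exchange/deletion condition, or equivalently Matsumoto--Tits via the root function $n_1$) to conclude that $(W', S')$ is a Coxeter system and that $n_1|_{W'}$ is its length function. The main obstacle is the \emph{second} step --- specifically, verifying that $S'$ genuinely generates $W'$ and that the indecomposable roots behave as simple roots should. This requires knowing that $\Phi_1$, although only the $W'$-orbit of a subset of roots rather than a full sub-root-system in the classical sense, still admits a consistent notion of positivity and indecomposability compatible with the ambient $\Phi$; this is exactly the content of the Dyer--Deodhar results cited, so in the write-up I would isolate the $\pm 1$ identity as a lemma and then quote \cite{Dy90, De89} for the structural conclusion, checking only that their hypotheses apply to $W' = \langle T_1\rangle$ in our weighted setting.
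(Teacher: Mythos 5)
The paper offers no proof of this proposition at all: it is quoted directly from Dyer and Deodhar, so the only thing to compare your attempt against is the decision to cite \cite{Dy90, De89} --- which is also where your plan ultimately lands. Your sketch of the internal structure of Dyer's argument (positivity in $\Phi_1$, generation by the ``indecomposable'' reflections, then the exchange condition via the root function) has the right shape, and deferring the structural core to the references is exactly what the paper does.

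One genuine misstatement, however: the identity $n_1(wt) = n_1(w) \pm 1$ does \emph{not} hold for arbitrary $t \in T_1$. For a general reflection $t \in T_1$ one only gets that $n_1(wt) - n_1(w)$ is odd and that its \emph{sign} is governed by whether $w(\alpha_t)$ lies in $\Phi^+$ or $\Phi^-$; already taking $w = e$ shows the magnitude can exceed $1$, since $n_1(t)$ itself can be $3, 5, \dots$ --- that is precisely what separates $S'$ from the rest of $T_1$. The difference has absolute value $1$ exactly when $t \in S'$, i.e.\ when $t$ permutes $\Phi_1^+ \setminus \{\alpha_t\}$. Since your induction in the second step only ever multiplies by elements of $S'$, the argument survives once the lemma is restated for $t \in S'$, but as written your ``workhorse'' identity is false. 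A smaller point you gloss over: one must also know that $S'$ generates all of $W' = \langle T_1 \rangle$ and that the reflections of the new Coxeter system $(W', S')$ are exactly $T_1$ rather than some larger subset of $T \cap W'$; the paper records this separately as a consequence of \cite[Theorem~3.3]{Dy90}, and it is needed later (e.g.\ in Lemma~\ref{lem:Bruhat}).
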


It is clear that the subspace spanned by $\Phi_1$ equipped with the natural $W'$-action coincides with the geometric representation of $W'$. We shall generally use $n_1$ to denote the (new) length function on $W'$ and reserve $\ell(\cdot)$ (or $n(\cdot)$) for the length function on $W$.
Let $m_{r,r'} \in \N \cup \{\infty \}$ such that $(r r')^{m_{r,r'}} =1$ for $r , r' \in S'$. 

We first prove the Bruhat order on $W'$ (as a Coxeter group itself) is compatible with the Bruhat order on $W$. It follows from \cite[Theorem~3.3]{Dy90} that the set of reflections (with respect to the Coxeter system $(W', S')$) in $W'$ is exactly $T_1$. One can also see this fact from Corollary~\ref{cor:s}.

\begin{lem}\label{lem:Bruhat}
Let $w' \in W'$ such that $n_1(w' s_{\alpha}) > n_1 (w')$ for some $s_{\alpha} \in T_1$ $(\alpha \in \Phi^+)$. Then
\begin{enumerate}
	\item 		we  have $\ell(w' s_{\alpha}) > \ell (w')$; 
	\item		for any $g, g' \in W_{S_0}$, we have $ \ell (g w' s_{\alpha}) > \ell(gw')$ and $ \ell (g w' s_{\alpha}g') > \ell(gw'g')$.
\end{enumerate}
\end{lem}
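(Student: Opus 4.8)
\subsection*{Proof plan}

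The plan is to reduce everything to the standard characterization of length in terms of root signs, applied both in $W$ (with root system $\Phi$) and in the reflection subgroup $W'$ (with root system $\Phi_1$), together with the behaviour of the parabolic subgroup $W_{S_0}$ on roots. Recall the criterion (see \cite[Section~5.7]{Hum90}): for $x \in W$ and $\gamma \in \Phi^+$ one has $\ell(x s_\gamma) > \ell(x)$ if and only if $x(\gamma) \in \Phi^+$. The analogous statement holds in the Coxeter system $(W', S')$ with its length function $n_1$ and root system $\Phi_1$; here, by \propref{prop:Dyer} together with the identification of the geometric representation of $W'$ with the span of $\Phi_1$, the positive system of $W'$ is the inherited one, namely $\Phi_1^+ = \Phi_1 \cap \Phi^+$. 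Since $s_\alpha \in T_1$ with $\alpha \in \Phi^+$, the root $\alpha$ lies in $\Phi_1^+$, and $w' \in W'$ permutes $\Phi_1$. Thus the hypothesis $n_1(w' s_\alpha) > n_1(w')$ gives $w'(\alpha) \in \Phi_1^+ = \Phi_1 \cap \Phi^+$; in particular $w'(\alpha) \in \Phi^+$, and applying the criterion in $W$ yields $\ell(w' s_\alpha) > \ell(w')$. This already proves (1).

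For (2) the crucial point is that $W_{S_0}$ preserves the positivity of the roots in $\Phi_1$. Let $\Phi_{W_{S_0}} = \Phi \cap \sum_{s \in S_0}\mathbb{R}\alpha_s$ be the root system of the parabolic subgroup $W_{S_0}$. Its reflections $s_\beta$ (for $\beta \in \Phi_{W_{S_0}}$) are exactly the elements $g s g^{-1}$ with $g \in W_{S_0}$ and $s \in S_0$, hence they all lie in $T_0$. As $T_0 \cap T_1 = \emptyset$, no root of $\Phi_1$ can belong to $\Phi_{W_{S_0}}$, so $\Phi_1^+ \subseteq \Phi^+ \setminus \Phi_{W_{S_0}}^+$. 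I then invoke the standard parabolic fact that $g \in W_{S_0}$ sends every root of $\Phi^+ \setminus \Phi_{W_{S_0}}^+$ to a positive root: writing $\beta = \sum_s c_s \alpha_s$ with $c_s \ge 0$ and $c_{s_0}>0$ for some $s_0 \notin S_0$, the element $g$ only alters the coefficients indexed by $S_0$, so $g(\beta)$ retains a strictly positive coefficient on $\alpha_{s_0}$ and is therefore positive. Hence $g(\beta) \in \Phi^+$ for every $\beta \in \Phi_1^+$ and every $g \in W_{S_0}$.

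It remains to combine these. Since $w'(\alpha) \in \Phi_1^+$, the previous step gives $(g w')(\alpha) = g\bigl(w'(\alpha)\bigr) \in \Phi^+$, whence $\ell(g w' s_\alpha) > \ell(g w')$ by the criterion in $W$. For the two-sided statement I rewrite $g w' s_\alpha g' = (g w' g')\, s_{g'^{-1}(\alpha)}$, using $s_{g'^{-1}(\alpha)} = g'^{-1} s_\alpha g'$. Because $\alpha \in \Phi_1^+$ and $g'^{-1} \in W_{S_0}$, the same positivity fact gives $g'^{-1}(\alpha) \in \Phi^+$, so it is a legitimate positive root for the criterion; moreover $(g w' g')\bigl(g'^{-1}(\alpha)\bigr) = g\, w'(\alpha) \in \Phi^+$. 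Applying the criterion in $W$ once more gives $\ell(g w' s_\alpha g') > \ell(g w' g')$, completing (2).

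The main obstacle is the second paragraph: pinning down the correct positive system for $W'$ (which is precisely the content of \propref{prop:Dyer} and the identification of geometric representations) and extracting the disjointness $\Phi_1 \cap \Phi_{W_{S_0}} = \emptyset$ from the hypothesis $T_0 \cap T_1 = \emptyset$. Once the roots of $\Phi_1$ are known to avoid the parabolic root system $\Phi_{W_{S_0}}$, the parabolic positivity fact together with the root-sign length criterion reduce the remaining assertions to routine bookkeeping with conjugation.
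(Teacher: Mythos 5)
Your proof is correct and follows essentially the same route as the paper: the root-sign length criterion of \cite[Proposition~5.7]{Hum90} applied in both $(W',S')$ and $(W,S)$, followed by the observation that $W_{S_0}$ preserves positivity of roots in $\Phi_1^+$. The only difference is that you spell out the justification of that last positivity fact (via the disjointness $\Phi_1 \cap \Phi_{W_{S_0}} = \emptyset$ and the coefficient argument), which the paper simply asserts.
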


\begin{proof}
Thanks to Proposition~\ref{prop:Dyer}, we know $n_1(\cdot)$ coincides with the length function on $W'$. Then thanks to \cite[Proposition~5.7]{Hum90}, we see that $w' (\alpha) \in \Phi^+_1 \subset \Phi^+$. Therefore we also have $\ell(w ' s_{\alpha} )  > \ell(w')$ by \cite[Proposition~5.7]{Hum90}. The first claim follows.

Now since $w' (\alpha) \in \Phi_1^+$ and $g \in W_{S_0}$, we must also have $gw' (\alpha) >0$, hence  $ \ell (g w' s_{\alpha}) > \ell(gw')$. 
On the other hand, we equivalently have $gw'g'\Big( g'^{-1}(\alpha) \Big) >0$, which means $ \ell (g w' g' g'^{-1} s_{\alpha}g') = \ell(g w'   s_{\alpha}g' ) > \ell(gw'g')$. The second claim follows.
\end{proof}

We then give a description of the set $S'$.
\begin{prop}\label{prop:S'}
Let $r \in T_1$. Then $r \in S\rq{}$ if and only if $r = w s w^{-1}$ for some $s \in S_1$ and $w \in W_{S_0}$. 

Moreover, the generator $r\in S\rq{}$ has a reduced expression (as an element in $W$) of the form $s_1 s_2 \cdots s_{k-1} s_k s_{k-1} \cdots s_2 s_1$, for $s_k \in S_1$ and $s_1, s_2, \dots, s_{k-1} \in S_0$.
\end{prop}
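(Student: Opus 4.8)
The plan is to use Proposition~\ref{prop:Dyer} to replace the statement by the geometric description $S'=\{s_\beta : \beta\in\Phi_1^+,\ n_1(s_\beta)=1\}$, where for $w\in W'$ I write $n_1(w)=|N_1(w)|$ with $N_1(w)=\Phi_1^+\cap w(\Phi_1^-)$; since the span of $\Phi_1$ carries the geometric representation of $(W',S')$, the usual length combinatorics apply to $n_1$. First I would record two facts. For $s\in S_1$ the only positive root sent to a negative one by $s$ is $\alpha_s\in\Phi_1^+$, so $N_1(s)=\{\alpha_s\}$ and $S_1\subseteq S'$. For $t\in S_0$ the only positive root made negative by $t$ is $\alpha_t\notin\Phi_1$, so $t$ permutes $\Phi_1^+$; hence every $w\in W_{S_0}$ permutes $\Phi_1^+$, and conjugation by such $w$ gives $N_1(w s_\beta w^{-1})=w(N_1(s_\beta))$, preserving $n_1$. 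Since $w s_k w^{-1}=s_{w(\alpha_{s_k})}\in T_1$ for $s_k\in S_1$, this already yields $W_{S_0}\cdot S_1\subseteq S'$, i.e.\ the ``if'' direction.

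For the converse I would induct on the height $\hgt(\beta)$ of $\beta\in\Phi_1^+$ with $s_\beta\in S'$. If $\beta$ is simple then $\beta=\alpha_s$ for a unique $s\in S$, and since $\Phi_1$ is the $W$-orbit of $\{\alpha_{s'}:s'\in S_1\}$ we must have $s\in S_1$, so $w=e$ works. If $\beta$ is not simple, I would pick a simple reflection $t$ with $\langle\beta,\alpha_t^\vee\rangle>0$ (such $t$ exists because $(\beta,\beta)>0$ forces $(\beta,\alpha_t)>0$ for some $t$); then $\beta':=t(\beta)\in\Phi_1^+$ has strictly smaller height. The decisive step is to show that \emph{any} such $t$ lies in $S_0$. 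I would argue by contradiction: if $t\in S_1$ then $t\in S'$ and $t\neq s_\beta$, so in the geometric representation of $(W',S')$ the simple reflection $s_\beta$ sends the distinct simple root $\alpha_t$ to a \emph{positive} root, giving $s_\beta(\alpha_t)\in\Phi_1^+$. But $s_\beta(\alpha_t)=\alpha_t-\langle\alpha_t,\beta^\vee\rangle\beta$ with $\langle\alpha_t,\beta^\vee\rangle>0$, and since the only positive roots proportional to $\alpha_t$ are $\pm\alpha_t$, the support of $\beta$ contains some $\alpha_u$ with $u\neq t$; the coefficient of $\alpha_u$ in $s_\beta(\alpha_t)$ is then strictly negative, forcing $s_\beta(\alpha_t)<0$, a contradiction. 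Hence $t\in S_0$, and by the invariance above $s_{\beta'}=t\,s_\beta\,t\in S'$. Induction gives $\beta'=w'(\alpha_{s_k})$ with $w'\in W_{S_0}$ and $s_k\in S_1$, whence $\beta=(tw')(\alpha_{s_k})$ with $tw'\in W_{S_0}$.

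To obtain the palindromic reduced expression I would track that the construction yields $w=t_1t_2\cdots t_{k-1}$ with each $t_i\in S_0$, where $t_i$ raises the height of the intermediate root $t_{i+1}\cdots t_{k-1}(\alpha_{s_k})$. Along the word $t_1\cdots t_{k-1}$ the associated roots are therefore positive of strictly increasing height, hence distinct, so by the standard reduced-word criterion the palindrome $t_1\cdots t_{k-1}\,s_k\,t_{k-1}\cdots t_1$ is a reduced expression for $s_\beta$, i.e.\ $\ell(s_\beta)=2\ell(w)+1$. Concretely, at each step $\ell(t\,s_{\beta'})=\ell(s_{\beta'})+1$ because $s_{\beta'}(\alpha_t)>0$ (here $(\alpha_t,\beta')=-(\alpha_t,\beta)<0$) and $t$ does not commute with $s_{\beta'}$; the needed length additivity in $W$ can also be read off from Lemma~\ref{lem:Bruhat}. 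Relabelling the middle letter $s_k\in S_1$ and the $t_i\in S_0$ as $s_1,\dots,s_{k-1}$ produces exactly the claimed form.

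I expect the main obstacle to be the dichotomy in the inductive step, namely that \emph{every} height-lowering simple reflection of a non-simple $\beta$ with $s_\beta\in S'$ lies in $S_0$. This is where the hypotheses enter essentially: the sign computation for $s_\beta(\alpha_t)$ uses only positivity of simple-root coordinates, valid in the geometric representation of a general (possibly non-crystallographic) Coxeter group, while the competing conclusion $s_\beta(\alpha_t)>0$ comes from reading $s_\beta$ as a simple reflection of $(W',S')$ via Proposition~\ref{prop:Dyer}. Once this is secured, both the orbit characterization and the palindromic reduced expression follow by routine height and length bookkeeping.
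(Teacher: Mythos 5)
Your argument is correct in substance but takes a genuinely different route from the paper's. The paper proves the ``only if'' direction directly and non-inductively: for $r=s_\alpha\in S'$ with an \emph{arbitrary} reduced expression $s_1\cdots s_n$, it examines the inversion set $\Phi^+\cap r(\Phi^-)=\{\alpha_{s_n},\,s_n(\alpha_{s_{n-1}}),\dots\}$, observes that $n_1(r)=1$ forces exactly one of these roots to lie in $\Phi_1$ (contributed by the unique letter $s_k\in S_1$, all other letters lying in $S_0$), and then compares the resulting palindrome $s_n\cdots s_{k+1}s_ks_{k+1}\cdots s_n$ with $s_1\cdots s_n$, a length count giving the reduced palindromic form at once. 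You instead run a descent on the root $\beta$ with $s_\beta\in S'$; your key step --- that every simple reflection $t$ with $(\beta,\alpha_t)>0$ lies in $S_0$, proved by playing the positivity of $s_\beta(\alpha_t)$ as a root of $(W',S')$ (which is exactly the condition $n_1(s_\beta)=1$) against the sign of its simple-root coordinates in $\Phi$ --- is a different and perfectly sound mechanism. It buys an explicit recursive construction of $w$ letter by letter, at the cost of having to verify length additivity step by step, which the paper gets for free from the single reduced word. Both proofs rest on the same inputs: the definition $S'=\{r\in T_1:n_1(r)=1\}$ and the fact that $W_{S_0}$ permutes $\Phi_1^+$.

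One technical caveat: induction ``on the height of $\beta$'' is not well-founded for a general Coxeter group, since the simple-root coordinates of roots are real numbers, the decrements $2(\beta,\alpha_t)$ are not uniformly bounded below, and $\Phi^+$ may be infinite. You should induct instead on $\ell(s_\beta)$ (or on the depth of $\beta$); your own bookkeeping already shows $\ell(t\,s_\beta\,t)=\ell(s_\beta)-2$ whenever $(\beta,\alpha_t)>0$ and $\beta$ is non-simple, so this descent terminates and simultaneously yields the reduced palindrome. With that substitution the proof is complete.
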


\begin{proof}
Following  Lemma~\ref{prop:S'}, we know that $n_1 (w s w^{-1}) = n_1(s) = 1$. So we have $w s w^{-1} \in S\rq{}$. This finishes the \lq\lq{}if\rq\rq{} direction. 

For the other direction, it suffices to prove the second statement.  Let $s_\alpha =r \in S'$ be a reflection of $V$ sending the root $\alpha$ $(\in \Phi^+) $ to $-\alpha$ with reduced expression $s_\alpha = s_1 \cdots s_n$ for $s_i \in S$. We know $  \Phi^+ \cap s_{\alpha}(\Phi^-)  = \{ \alpha_{s_n}, s_{n}(\alpha_{s_{n-1}}), \dots,\}$. By the definition of $S'$, we have $\Phi_1^+ \cap s_{\alpha}(\Phi^-) = \{\alpha\}$. Assume  $s_{n} s_{n-1} \cdots s_{k+1} (\alpha_{s_k}) = \alpha$ for $s_k \in S_1$. It also follows that $s_{n}, s_{n-1}, \dots, s_{k+1}, s_{k-1}, \dots, s_1 \in S_0 $.

Now we can write $s_{\alpha} = s_{n} s_{n-1} \cdots s_{k+1} s_{k} s_{k+1} \cdots s_{n-1} s_{n} = s_1 \cdots s_n $. We obtain that $s_{n} s_{n-1} \cdots s_{k+1}  = s_1 \cdots s_{k-1}$. Since we have reduced expressions on both sides, we must have $n =2k+1$ and $r= s_{\alpha} = s_1 s_2 \cdots s_{k-1} s_k s_{k-1} \cdots s_2 s_1$ being a reduced expression. 
\end{proof}

\begin{cor}\label{cor:s}
Let $w \in W_{S_0}$. The conjugation action of $s$ preserves the set $S'$.
\end{cor}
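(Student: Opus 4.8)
The plan is to deduce this immediately from the explicit description of $S'$ furnished by \propref{prop:S'}. Recall from that proposition that an element $r \in T_1$ lies in $S'$ precisely when it can be written as $r = w' s (w')^{-1}$ for some $s \in S_1$ and some $w' \in W_{S_0}$. So the content of the corollary is already encoded in this characterization, and the argument is a matter of multiplying the conjugating elements inside the parabolic subgroup $W_{S_0}$.

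Concretely, I would fix $w \in W_{S_0}$ and an arbitrary $r \in S'$, and invoke \propref{prop:S'} to write $r = w' s (w')^{-1}$ with $s \in S_1$ and $w' \in W_{S_0}$. Conjugating then gives
\[
w r w^{-1} = (w w') \, s \, (w w')^{-1}.
\]
Since $W_{S_0}$ is a subgroup of $W$, the product $w w'$ again lies in $W_{S_0}$; and $w r w^{-1} \in T_1$ because $T_1 = \bigcup_{x \in W} x S_1 x^{-1}$ is stable under conjugation by any element of $W$. Applying the ``if'' direction of \propref{prop:S'} to $w r w^{-1} = (w w') s (w w')^{-1}$ yields $w r w^{-1} \in S'$, so $w S' w^{-1} \subseteq S'$. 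Running the same argument with $w^{-1} \in W_{S_0}$ in place of $w$ gives the reverse inclusion, whence $w S' w^{-1} = S'$.

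I expect no genuine obstacle here: the entire weight of the statement has been shifted into the characterization of $S'$ proved in \propref{prop:S'}, and what remains is the routine observation that $W_{S_0}$ is closed under multiplication and inversion. The only point deserving a word of care is confirming that a conjugate of an element of $T_1$ stays in $T_1$, so that \propref{prop:S'} is applicable to $w r w^{-1}$; this is automatic from the very definition of $T_1$ as a union of $W$-conjugates of $S_1$.
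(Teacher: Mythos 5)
Your argument is correct and is exactly the deduction the paper intends: the corollary is stated without proof as an immediate consequence of Proposition~\ref{prop:S'}, and your computation $wrw^{-1} = (ww')s(ww')^{-1}$ with $ww' \in W_{S_0}$, together with the observation that $T_1$ is conjugation-stable so the proposition applies, is precisely the missing routine verification. Nothing further is needed.
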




\subsection{Hecke algebras}
We denote by $\mH' = \mH_{W'}$ the Hecke algebra associated with the Coxeter subgroup $W'$ of $W$ with  generators $T_{r}'$ ($r \in S'$) subject to the relations (we write $v_r = v^{L(r)}$ for $r \in S'$):
\begin{align*}
(T'_{r} - v^{-1}_r)(T'_r+ v_r) &= 0, \quad \text{ for }r \in S'\\
T'_r T'_{r'} T'_r \cdots &= T'_{r'} T'_r T'_{r'}\cdots, 
\end{align*}
where both products in the second relation have $m_{r,r'}$ factors for any $r \neq r' \in S'$ such that $m_{r,r'}\neq \infty$. Note that we have $v_r =v_{r'}$ for any $r, r' \in S'$, thanks to the definition of the function $L$ and Proposition~\ref{prop:S'}. So this is a Hecke algebra with the weight function $L(r) =1$ for all $r \in S'$. 

We write the canonical basis element in $\mH'$ as $c'_{w}$ for any $w \in W'$ to distinguish it from the canonical basis element $c_w$ in $\mH$ (since $W' \subset W$). But we shall see very soon they actually coincide.

\begin{thm}\label{thm:rho}
We have the $\mA$-algebra embedding $\rho: \mH' \rightarrow \mH$ such that 
\[
	\rho (T'_{r}) = T_r, \quad \text{ for } r \in S',
\]
where $T_r$ is the element in $\mH$ defined in \eqref{eq:Tw}.
Moreover, we have $\rho(T_w') = T_w$ and $\rho(c'_w) = c_w$ for any $ w \in W' \subset W$.
\end{thm}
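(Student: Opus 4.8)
The plan is to build $\rho$ on the generators $T_r'$, verify the defining relations of $\mH'$, and then identify the images of the standard and canonical bases. Everything rests on one multiplicativity statement, which I would isolate first.

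\emph{Main claim.} For $w' \in W'$ and $r \in S'$ with $n_1(w'r) > n_1(w')$, one has $T_{w'} T_r = T_{w'r}$ in $\mH$, where all three are standard basis elements of $\mH$ indexed by elements of $W' \subseteq W$. To prove it I would use the palindromic reduced expression $r = s_1 \cdots s_{k-1} s_k s_{k-1} \cdots s_1$ from \propref{prop:S'}, with $s_1, \dots, s_{k-1} \in S_0$, $s_k \in S_1$, and set $g = s_1 \cdots s_{k-1} \in W_{S_0}$, so that $T_r = T_{s_1} \cdots T_{s_{k-1}} T_{s_k} T_{s_{k-1}} \cdots T_{s_1}$. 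The key elementary point is that for $s \in S_0$ we have $T_s^2 = 1$, whence $T_x T_s = T_{xs}$ for every $x \in W$ irrespective of lengths; multiplication by the $T_{s_i}$ with $i<k$ is therefore ``free''. Applying this to the left block gives $T_{w'} T_{s_1} \cdots T_{s_{k-1}} = T_{w'g}$, and to the right block gives $T_{w'g s_k}\, T_{s_{k-1}} \cdots T_{s_1} = T_{w'g s_k g^{-1}} = T_{w'r}$. Thus the claim reduces to the single middle step $T_{w'g} T_{s_k} = T_{w'g s_k}$, i.e. to $\ell(w'g s_k) > \ell(w'g)$, equivalently $(w'g)(\alpha_{s_k}) > 0$. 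Since $\beta := g(\alpha_{s_k})$ is exactly the positive root of $r$, this reads $w'(\beta) > 0$, and the proof of \lemref{lem:Bruhat} shows precisely that $n_1(w'r) > n_1(w')$ forces $w'(\beta) \in \Phi_1^+ \subset \Phi^+$. This reduction---threading the palindrome so that only one genuine length increase survives, and matching it to \lemref{lem:Bruhat}---is where I expect the real work to be.

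Granting the Main claim the rest is formal. The quadratic relation holds because $v_r = v$ for all $r \in S'$ and $T_r = T_g T_{s_k} T_g^{-1}$ is a conjugate of $T_{s_k}$, so $(T_r - v^{-1})(T_r + v) = T_g (T_{s_k} - v^{-1})(T_{s_k} + v) T_g^{-1} = 0$. For a braid relation between $r \neq r'$ in $S'$, each partial product $r, rr', rr'r, \dots$ is reduced in $W'$, so $n_1$ increases by one at every step; iterating the Main claim shows both alternating products equal $T_{w_0}$ with $w_0 = rr'r\cdots$, so the braid relations hold and $\rho$ is a well-defined $\mA$-algebra homomorphism. Induction on $n_1(w)$ via the Main claim (write $w = w'r$ with $n_1(w'r) = n_1(w')+1$) then yields $\rho(T_w') = T_w$ for all $w \in W'$. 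Since $\rho$ carries the basis $\{T_w' : w \in W'\}$ to the linearly independent set $\{T_w : w \in W' \subseteq W\}$, it is injective, hence an embedding.

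Finally, for $\rho(c'_w) = c_w$ I would appeal to the uniqueness characterization of the canonical basis in \cite[Chap~5]{Lu03}. First, $\rho$ intertwines the bar involutions: on generators $\rho(\overline{T'_r}) = \rho((T'_r)^{-1}) = T_r^{-1}$, while $\overline{T_r} = T_r^{-1}$ because the reduced word for $r$ is a palindrome, so applying the bar involution termwise reproduces exactly the factors of $T_r^{-1}$; as $\rho$ is an $\mA$-algebra homomorphism and both bars are $\mA$-semilinear ring homomorphisms, agreement on the $T'_r$ gives agreement on all of $\mH'$. Hence $\rho(c'_w)$ is bar-invariant. Moreover $\rho(c'_w) = \sum_{y \in W'} p'_{y,w} T_y$, where the $p'_{y,w}$ are the Kazhdan--Lusztig polynomials of the equal-parameter algebra $\mH'$, so $p'_{w,w} = 1$ and $p'_{y,w} \in v\Z[v]$ for $y <' w$. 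Using the compatibility of the Bruhat order of $W'$ with that of $W$ (so $y \le' w$ implies $y \le w$), this expression satisfies precisely the bar-invariance and triangularity conditions defining $c_w$ in $\mH$, and uniqueness forces $\rho(c'_w) = c_w$.
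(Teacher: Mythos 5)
Your argument is correct and rests on the same two inputs as the paper (the palindromic reduced expression from Proposition~\ref{prop:S'} and the root-positivity statement in the proof of Lemma~\ref{lem:Bruhat}), but the core verification is organized along a genuinely different route. The paper checks the braid relations by substituting the palindromic words for $r$ and $r'$ and then reducing the resulting, possibly non-reduced, expression inside $W$, arguing that only the quadratic relations $s^2=1$ with $s\in S_0$ can arise in the reduction --- which is exactly what survives in $\mH$ since $T_s^2=1$ there --- and it disposes of $\rho(T'_w)=T_w$ by an ``entirely similar argument''. You instead isolate the single multiplicativity claim $T_{w'}T_r=T_{w'r}$ whenever $n_1(w'r)>n_1(w')$, and prove it directly: the observation that $T_xT_s=T_{xs}$ unconditionally for $s\in S_0$ (the quadratic relation degenerates) makes the two $S_0$-blocks of the palindrome ``free'', so the only genuine length comparison left is $\ell(w'gs_k)>\ell(w'g)$, i.e.\ $w'(\alpha)>0$ for $\alpha=g(\alpha_{s_k})$ the positive root of $r$, which is exactly what the proof of Lemma~\ref{lem:Bruhat} supplies. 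This buys a uniform treatment: the braid relations and $\rho(T'_w)=T_w$ both follow by iterating the claim along reduced words in $W'$, and you avoid the word-combinatorial bookkeeping (which deletions can occur, and why braid moves in $W$ preserve the count of $S_1$-letters) that the paper's reduction argument leaves implicit. The quadratic relation via conjugation by $T_g$, the bar-compatibility via the palindrome, and the identification $\rho(c'_w)=c_w$ via Bruhat-order compatibility and uniqueness of the canonical basis coincide with the paper's treatment.
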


\begin{proof}
Let $r \in S'$. Thanks to Proposition~\ref{prop:S'}, we have a reduced expression 
\begin{equation}\label{eq:r}
r= s_1 s_2 \cdots s_{k-1} s_k s_{k-1} \cdots s_2 s_1,
\end{equation}
for $s_k \in S_1$ and $s_1, s_2, \dots, s_{k-1} \in S_0$.  Therefore we have $L(r) = L(s_k)$, hence $v_r= v_{s_k}$.

We write $g = s_1 s_2 \cdots s_{k-1}$. Note that $T_{g^{-1}} = T^{-1}_g$. We first check the quadratic relations. We have
\[
\rho \Big( (T'_{r} - v^{-1}_{r})(T'_{r} + v_{r}) \Big) = T_{g} (T_{s_k} - v^{-1}_{s_k})(T_{s_k} + v_{s_k})T_{g}^{-1} =0.
\]

We then check the braid relations. Let $w = rr'r\cdots = r'r r' \cdots$ with $m_{r,r'}$ factors for any $r \neq r' \in S'$ such that $m_{r,r'}\neq \infty$. We first replace $r, r' \in S'$ by the reduced expressions obtained in Proposition~\ref{prop:S'}.

After the replacement, if we obtain reduced expressions in $W$, then we naturally have 
\[
T_w = T_r T_{r'} T_r \cdots = T_{r'} T_r T_{r'}\cdots, \text{ as an identity in }\mc{H}.
\]

If the expression we obtained for $w$ is not reduced in $W$, then we can apply the braid relations and quadratic relations to obtain a reduced expression of $w$ from the products $rr'r\cdots = r'r r' \cdots$ (c.f.\cite{Hum90}). Since $n_1(w) = m_{r,r'}$,  the reduced expression of $w$ (in $W$) contains exactly $m_{r,r'}$ factors in $S_1$.
Hence we can only apply the quadratic relations $s^2=1$ with $s \in S_0$ (which reduced the number of factors), but never the relations $s^2 =1 $ for $s \in S_1$. Then since we also have $T_{s}^2 =1 $ in $\mH$ for $s \in S_0$. We are able to perform those operations in the Hecke algebra $\mH$ as well. Therefore we shall still have 
\[
T_w = T_r T_{r'} T_r \cdots = T_{r'} T_r T_{r'}\cdots.
\]
This shows that $\rho$ is an algebra homomorphism. Entirely similar argument shows that we also have $\rho (T_w') = T_w$.

Following the reduced expression of $r$ in \eqref{eq:r}, it is easy to see we have $\rho(\overline{T'_r}) = \overline{T_r}$, that is $\rho$ commutes with the bar involutions.  Now in order to prove that $\rho(c'_w) = c_w$, it suffices to prove the Bruhat order in $W'$ is compatible with the Bruhat order in $W$, which follows from Lemma~\ref{lem:Bruhat}.
\end{proof}

So from now on we shall omit the superscripts $'$ for the generators $T'_r \in \mH'$ and for the elements $c'_w \in \mH'$.

Now combining Theorem~\ref{thm:rho} with Lemma~\ref{lem:Bruhat}, we obtain the following corollary of the canonical basis element $c_w \in \mH$ for arbitrary $w \in W$, which is also proved in \cite[\S6.1]{Lu03}.
\begin{cor}\label{cor:scw}
For any $w \in W$ and $g \in W_{S_0}$, we have $T_gc_{w} = c_{gw}$ and $c_{w} T_g = c_{wg}$. 
\end{cor}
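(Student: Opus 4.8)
The plan is to reduce the statement to the case of a single generator $s\in S_0$ and then invoke the uniqueness characterization of the canonical basis, rather than trying to control the Bruhat order directly. First I would record two elementary facts about $s\in S_0$: since $v_s = v^{L(s)} = 1$, the quadratic relation degenerates to $T_s^2 = 1$, so $T_s$ is invertible with $T_s^{-1}=T_s$ and $\overline{T_s}=T_s^{-1}=T_s$; and, because the correction term $(v_s^{-1}-v_s)$ vanishes, one has $T_sT_y = T_{sy}$ for every $y\in W$, with no lower-order term. Consequently, for $g\in W_{S_0}$ with a reduced expression $g=s_1\cdots s_k$ ($s_i\in S_0$, reduced already in $W_{S_0}$, hence in $W$) we get $T_g = T_{s_1}\cdots T_{s_k}$, $\overline{T_g}=T_g$, and $T_gT_y=T_{gy}$. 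Thus it suffices to prove $T_sc_u = c_{su}$ for all $u\in W$ and all $s\in S_0$: the general identity $T_gc_w=c_{gw}$ then follows by applying the single-generator case $k$ times, with $u = s_{i+1}\cdots s_k\,w$ at the $i$-th step.

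For the single-generator step I would expand $T_sc_w = \sum_{y\le w}p_{y,w}T_{sy} = \sum_{z\in W}p_{sz,w}T_z$, reindexing by $z=sy$ (legitimate since $s^2=e$). This element is bar-invariant, being a product of the bar-invariant elements $T_s$ and $c_w$. Its coefficient of $T_{sw}$ equals $p_{w,w}=1$, while for every $z\ne sw$ the coefficient $p_{sz,w}$ either vanishes or satisfies $sz<w$ and so lies in $v\Z[v]$. Therefore $T_sc_w$ equals $T_{sw}$ plus a $v\Z[v]$-combination of the $T_z$ with $z\ne sw$, which is exactly the shape of the defining expansion of $c_{sw}$.

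I would then conclude by the standard vanishing argument: the difference $T_sc_w-c_{sw}$ is bar-invariant, and since the two coefficients of $T_{sw}$ cancel, all of its coefficients lie in $v\Z[v]$; a bar-invariant element of $\bigoplus_{z}v\Z[v]\,T_z$ must be zero (inspect a Bruhat-maximal $z$ in its support, where the coefficient of $T_z$ in the bar of the element is $\overline{a_z}$, and use that the only bar-fixed element of $v\Z[v]$ is $0$). Hence $T_sc_w=c_{sw}$. The identity $c_wT_g=c_{wg}$ follows symmetrically, using $T_yT_s=T_{ys}$ for $s\in S_0$, or by applying the anti-automorphism $T_w\mapsto T_{w^{-1}}$ — which fixes each $T_s$ and sends $c_w$ to $c_{w^{-1}}$ — to the left-handed identity.

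The point I expect to require the most care is precisely this choice to argue by uniqueness rather than by a head-on verification. A naive approach would try to show directly that every $z$ occurring in $T_sc_w$ satisfies $z\le sw$, but left multiplication by $s$ does not preserve the Bruhat order in general, so this support bound is not transparent term by term. The vanishing argument sidesteps the difficulty entirely, and in fact recovers the bound $\{\,sy : p_{y,w}\ne 0\,\}\subseteq\{\,z : z\le sw\,\}$ a posteriori; this is exactly the order-theoretic subtlety that \lemref{lem:Bruhat} together with \thmref{thm:rho} are designed to control in the more hands-on approach.
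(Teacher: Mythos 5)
Your proof is correct, and it takes a genuinely different route from the one the paper indicates. The paper obtains the corollary by combining \thmref{thm:rho} (the embedding $\rho:\mH'\to\mH$ matching standard and canonical bases) with \lemref{lem:Bruhat} (compatibility of Bruhat orders under multiplication by $W_{S_0}$), i.e.\ it leans on the reflection-subgroup machinery it has already built, and then remarks that the statement is also proved in \cite[\S6.1]{Lu03}. Your argument is essentially that second, self-contained route. Its entire content is the degeneration $v_s=1$ for $s\in S_0$, which kills the correction term $(v_s^{-1}-v_s)T_y$ so that $T_sT_y=T_{sy}$ unconditionally and makes $T_s$ bar-fixed; then $T_sc_w$ is bar-invariant with expansion $T_{sw}+\sum_{z\ne sw}a_zT_z$, $a_z\in v\Z[v]$, and the standard uniqueness lemma (a bar-invariant element all of whose coefficients lie in $v\Z[v]$ vanishes, seen by inspecting a Bruhat-maximal element of the finite support and using $\overline{T_z}\in T_z+\sum_{y<z}\mA T_y$) forces $T_sc_w=c_{sw}$. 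All the supporting steps check out: the reduction to a single $s\in S_0$ by iterating along a reduced expression of $g$, and the right-handed identity via either the symmetric computation or the anti-automorphism $T_w\mapsto T_{w^{-1}}$. What your route buys is independence from \lemref{lem:Bruhat} and \thmref{thm:rho}: you never need the implication $y\le w\Rightarrow gy\le gw$, since uniqueness delivers the support bound a posteriori, exactly as you observe. What the paper's route buys is coherence with its overall strategy of transporting statements through the reflection subgroup $W'$, where \lemref{lem:Bruhat} is needed anyway.
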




\subsection{Soergel bimodules}
Let $s \in S$. For any reflection $r = w s w^{-1}$ in $W$, we define $\alpha_r = w(\alpha_s) \in \mathfrak{h}$ and $\alpha^\vee_r = w(\alpha^\vee_s) \in \mathfrak{h}^*$. We assume that $(\mathfrak{h}, \{\alpha_r\}, \{\alpha^\vee_r\})$ for $r \in S'$ is also a realization of the Coxeter system $(W', S')$. Certainly if  $\mathfrak{h}$ is a reflection faithful representation of $W$ as in \cite{Soe07}, then $\mathfrak{h}$ is also a reflection faithful representation of $W'$.

\begin{lem}\label{lem:iso}
Let $s \in S$ and $w \in W$. We have $R_{w} \otimes_R B_{s} \otimes_R R_{w^{-1}} \cong R \otimes_{R^{wsw^{-1}}}R$ as $R$-bimodules.
\end{lem}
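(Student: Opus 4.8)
The plan is to contract the two standard bimodules against $B_s$ and then exhibit an explicit isomorphism built from the automorphism $w$ acting on $R$. The starting point is that $R_w$ is just $R$ with its left action twisted by $w$ and its right action the ordinary one (by the definition $f\cdot a = a\cdot w(f)$), so the functor $M \mapsto R_w \otimes_R M \otimes_R R_{w^{-1}}$ merely re-twists the two outer actions of an $R$-bimodule $M$. On standard bimodules this realizes a conjugation rule of the form $R_x \mapsto R_{wxw^{-1}}$, and in particular the convention-independent identity $R_w \otimes_R R_{w^{-1}} \cong R$ holds. I therefore expect the triple product to be again a ``cyclic'' bimodule $R \otimes_{R^t} R$ for the conjugated reflection $t = wsw^{-1}$, and the whole content is to verify this with the correct invariant subring.

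Concretely, I would first put the underlying space of $R_w \otimes_R B_s \otimes_R R_{w^{-1}}$ into a normal form. Since $R_w \cong R$ as a right $R$-module and $R_{w^{-1}} \cong R$ as a left $R$-module, every element is a sum of simple tensors $1 \otimes (p \otimes q) \otimes 1$ with $p,q \in R$, subject only to the balancing relation $1 \otimes (p s' \otimes q)\otimes 1 = 1 \otimes (p \otimes s' q)\otimes 1$ inherited from $\otimes_{R^s}$, for $s' \in R^s$. The next step is to read off the two $R$-actions on these generators: pushing the left action past $R_w$ and the right action past $R_{w^{-1}}$ shows that each outer action is twisted by $w$ before reaching the corresponding factor of $B_s$.

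With this normal form in hand, I would define the candidate map by applying the algebra automorphism $w$ (the extension to $R = S(\mathfrak{h}^*)$ of the $W$-action on $\mathfrak{h}^*$) to both slots, $1 \otimes (p \otimes q)\otimes 1 \mapsto w(p) \otimes w(q)$ (resp.\ $w^{-1}$, according to the fixed conventions), with inverse given by the opposite twist. The one genuinely structural input is the identity $w(R^s) = R^{wsw^{-1}}$: if $s$ fixes $s'$ then $wsw^{-1}$ fixes $w(s')$. This is exactly what makes the map well defined, since it converts the $R^s$-balancing relation on the left-hand side into the $R^{wsw^{-1}}$-balancing relation defining $R \otimes_{R^{wsw^{-1}}} R$. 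Checking that the map intertwines the two $R$-actions then reduces to seeing that the $w$-twists introduced by the outer standard bimodules are cancelled by the twist applied to the slots, using only the commutativity of $R$; one also tracks the degree shift $[1]$ carried by $B_s$.

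The main obstacle I anticipate is purely bookkeeping: correctly tracking, through the two balanced tensor products, which of $w$ and $w^{-1}$ twists the left versus the right action, so that the composite lands in $R \otimes_{R^{wsw^{-1}}} R$ rather than in $R \otimes_{R^{w^{-1}sw}} R$. These two candidate targets agree only after relabelling $w \leftrightarrow w^{-1}$, so the conventions for the $R_w$-action and for the $W$-action on $R$ must be pinned down before the verification is carried out. Once the normal form and the invariance identity $w(R^s) = R^{wsw^{-1}}$ are in place, no new idea is required; everything is a direct check on simple tensors.
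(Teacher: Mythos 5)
Your proposal is correct and is in substance the same argument as the paper's: both are direct, elementary verifications of the bimodule isomorphism, with the key point being that conjugation by $w$ carries the invariant subring $R^s$ to $R^{wsw^{-1}}$. The only cosmetic difference is that you package the check as an explicit map $1\otimes(p\otimes q)\otimes 1\mapsto w(p)\otimes w(q)$ on simple tensors, while the paper compares the two sides as free rank-two $R$-modules with bases $\{1\otimes 1,\Delta_s\}$ whose structure constants are given by the twisted Demazure operator $\partial_{wsw^{-1}}=w\partial_s w^{-1}$; your remark about pinning down the $w$ versus $w^{-1}$ convention is exactly the bookkeeping the paper leaves implicit in its ``direct computation.''
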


\begin{proof}
Let $r = w s w^{-1}$ be a reflection in $W$. We define $\partial_{r}  = w \partial_{s} w^{-1}: R \rightarrow R^{r}(-2)$ such that 
\[
\partial_{r}(f) = \frac{f - r(f)}{\alpha_{r}}.
\]

We obtain that $R \otimes_{R^{r}}R$ is a free left (or right) $R$-module with basis
\[
c^r_1 = 1 \otimes 1 \in R  \otimes_{R^{r}} R , \qquad c^r_{r} = \Delta_{s}  \in R  \otimes_{R^{r}} R ,
\]
such that 
\[
x c^r_{r} = c^r_{r} x, \qquad x c^r_1 =  c^r_1 s(x) + \partial_{r}(x) c^r_{r}, \quad x \in R.
\]

On the other hand, we know that $B_{s'}$ is a free left (or right) $R$-module with basis 
\[
c^{s}_1 = 1 \otimes 1 \in R  \otimes_{R^{s}} R , \qquad c^{s}_{s} = \Delta_{s} \in R  \otimes_{R^{s}} R,
\]
such that 
\[
r c^{s}_{s} = c^{s}_{s} r, \qquad r c^{s}_1 =  c^{s}_1 s(r) + \partial_{s}(r) c^{s}_{s}.
\]
Hence by direct computation we see $R_{w} \otimes_R B_{s} \otimes_R R_{w^{-1}} \cong R \otimes_{R^{r}}R$ as $R$-bimodules.
\end{proof}

%
%
%
%
%
%
%
%
%
%

Now for any $r \in S'$ such that $r = wsw^{-1}$ with $w \in W_{S_0}$ and $s \in S_1$, we define the $R$-bimodule (independent of the reduced expression of $r$)
\[
B'_{r} =  R \otimes_{R^r} R \cong R_w B_{s} R_{w^{-1}}.
\]
Recall the definitions of $\mathbb{BS}$Bim$^L$ and $\mathbb{S}$Bim$^L$ in \S\ref{subset:def}.  Let $\mathbb{S}\text{Bim}_{W'}$ be the Karoubi envelop of the monoidal subcategory of $\mathbb{S}\text{Bim}^L$ generated by $B'_r$ for $r \in S'$. It is easy to see that $\mathbb{S}$Bim$_{W'}$ the category of Soergel Bimodules associated with the realization $(\mathfrak{h}, \{\alpha_r\}, \{\alpha^\vee_r\})$ of the Coxeter system $(W', S')$.

\begin{prop}\label{prop:W'}
There is a unique isomorphism of $\mA$-algebras 
\begin{align*}
	\varepsilon: \mH' & \longrightarrow [\mathbb{S}\text{Bim}_{W'}],\\
					c_r & \longmapsto B'_r, \quad \text{ for } r \in S'.
\end{align*}
The inverse of $\varepsilon$ is given by the character map $ch : [\mathbb{S}\text{Bim}_{W'}] \rightarrow \mH'$. 

For any $w \in W'$, there exists a unique indecomposable bimodule (up to isomorphism) $B_w\rq{}$ which occurs as a summand of $B_{\underline{w}}\rq{}$ for any reduced expression $\underline{w}$ of $w$ (in $W\rq{}$) such that $R_w[\nu]$ occurs in its standard filtration. The set $\{B_w\rq{} [\nu] \vert w \in W', \nu \in \Z\}$ gives a complete list of indecomposable bimodules in $\mathbb{S}\text{Bim}_{W'}$.
\end{prop}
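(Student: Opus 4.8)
The plan is to recognize that Proposition~\ref{prop:W'} is not really a new result but a transport of Soergel's theorem through the identifications already established. The key observation is that, by Lemma~\ref{lem:iso} and the definition $B'_r = R \otimes_{R^r} R$, each generating bimodule $B'_r$ is literally the Bott--Samelson bimodule attached to the simple reflection $r \in S'$ for the realization $(\mathfrak{h}, \{\alpha_r\}, \{\alpha^\vee_r\})$ of $(W', S')$. Since we have assumed that this triple is a (reflection faithful, hence Soergel) realization of the Coxeter system $(W', S')$, the category $\mathbb{S}\text{Bim}_{W'}$ is by construction nothing other than the usual category of Soergel bimodules for $(W', S')$ in the sense of \cite{Soe07, EW14}. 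Moreover, by the discussion in \S\ref{subsec:Hecke} the weight function on $W'$ is constant equal to $1$, so $\mH'$ is a Hecke algebra with equal parameters. This is precisely the setting categorified by Soergel and Elias--Williamson.

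With that identification in place, I would simply invoke the equal-parameter categorification theorem of \cite{Soe07, EW14} for the Coxeter system $(W', S')$. That theorem provides exactly the two assertions of the proposition: first, the existence and uniqueness (up to isomorphism and shift) of an indecomposable self-dual object $B'_w$ occurring in $B'_{\underline{w}}$ for any reduced word, singled out by the condition that $R_w[\nu]$ appears in its standard filtration, together with the statement that $\{B'_w[\nu]\}$ exhausts the indecomposables; and second, the algebra isomorphism $\varepsilon : \mH' \to [\mathbb{S}\text{Bim}_{W'}]$ sending $c_r \mapsto B'_r$ with inverse the character map $ch$. The one point requiring a sentence of justification is that the character map here agrees with the one used by Soergel: since $ch$ is defined via standard filtrations and the standard basis $\{T_w\}$, and since Theorem~\ref{thm:rho} identifies the standard and canonical bases of $\mH'$ with their images in $\mH$, the character of $B'_r$ computed inside $\mH'$ matches the expected value $c_r = T_r + v_r$ (for $L(r)=1$).

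The main obstacle, such as it is, is verificational rather than deep: I must check that the realization $(\mathfrak{h}, \{\alpha_r\}, \{\alpha^\vee_r\})_{r \in S'}$ genuinely satisfies Soergel's hypotheses so that \cite{Soe07, EW14} applies verbatim. The excerpt has already granted this as an assumption (the reflection faithfulness of $\mathfrak{h}$ for $W$ descending to $W'$), so the remaining work is to confirm that the $m_{r,r'}$ governing the braid relations in $S'$ match the Coxeter data of the realization, which is immediate since both are read off from the order of $rr'$ in $W$. Once these compatibilities are recorded, the proposition follows directly from the classical theorem applied to $(W', S')$, and no independent argument about indecomposables or Grothendieck groups needs to be reconstructed.
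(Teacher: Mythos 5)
Your proposal is correct and takes essentially the same route as the paper, whose entire proof is the one-line observation that this is Soergel's categorification theorem applied to the realization $(\mathfrak{h}, \{\alpha_r\}, \{\alpha^\vee_r\})$ of $(W',S')$. The extra compatibility checks you record (equal parameters on $S'$, agreement of the character maps, matching of the $m_{r,r'}$) are reasonable elaborations of the same citation rather than a different argument.
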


\begin{proof}This is exactly Soergel's categorification theorem of the Hecke algebra $\mc{H}'$ with the realization $(\mathfrak{h}, \{\alpha_r\}, \{\alpha^\vee_r\})$ of the Coxeter system $(W', S')$.
\end{proof}

The following lemma is the categorical analog of Corollary~\ref{cor:s}.
\begin{lem}\label{lem:equiv}
Let $w \in W_{S_0}$. The functor $R_{w} \otimes - \otimes R_{w^{-1}} : \mathbb{S}$Bim$^L \rightarrow \mathbb{S}$Bim$^L$ is an equivalence of monoidal category. In particular, the functor restricts to an equivalence  $R_{w} \otimes - \otimes R_{w^{-1}} : \mathbb{S}$Bim$_{W'} \rightarrow \mathbb{S}$Bim$_{W'}$.
\end{lem}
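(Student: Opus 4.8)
The plan is to prove that $R_w \otimes - \otimes R_{w^{-1}}$ is a monoidal auto-equivalence of $\mathbb{S}\text{Bim}^L$ by first exhibiting an explicit quasi-inverse, namely the functor $R_{w^{-1}} \otimes - \otimes R_w$, and then verifying that it preserves the defining generating objects and carries the monoidal structure. The starting point is the standard fact about the standard bimodules $R_w$: since $R_w \otimes_R R_{w'} \cong R_{ww'}$ as $R$-bimodules (this is immediate from the definition $f \cdot a = a \cdot w(f)$ together with associativity of the twists), one has $R_w \otimes_R R_{w^{-1}} \cong R_e = R$, the monoidal unit. Consequently $R_{w^{-1}} \otimes (R_w \otimes M \otimes R_{w^{-1}}) \otimes R_w \cong M$ naturally in $M$, so the two functors are mutually inverse up to natural isomorphism, proving the functor is an equivalence of categories.

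Next I would check that the functor genuinely lands in $\mathbb{S}\text{Bim}^L$, i.e. that conjugation by $R_w$ preserves the subcategory generated by the $R_s$ ($s \in S_0$) and $B_{s'}$ ($s' \in S_1$). On a generator of the first kind we compute $R_w \otimes R_s \otimes R_{w^{-1}} \cong R_{wsw^{-1}}$, and since $w \in W_{S_0}$ and $s \in S_0$ we have $wsw^{-1} \in W_{S_0}$, so this is again a product of standard bimodules $R_t$ with $t \in S_0$; thus it lies in $\mathbb{BS}\text{Bim}^L$. On a generator of the second kind, \lemref{lem:iso} gives $R_w \otimes B_{s'} \otimes R_{w^{-1}} \cong R \otimes_{R^{ws'w^{-1}}} R$, which is precisely the object $B'_{ws'w^{-1}}$; because $w \in W_{S_0}$ and $s' \in S_1$, \propref{prop:S'} guarantees $ws'w^{-1} \in S'$, and \corref{cor:s} tells us the conjugation action preserves $S'$. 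Hence the image of each generator again lies in $\mathbb{S}\text{Bim}^L$ (in fact in $\mathbb{S}\text{Bim}_{W'}$ in the second case). Monoidality is then the observation that conjugation by an invertible object is automatically monoidal: $R_w \otimes (M \otimes_R N) \otimes R_{w^{-1}} \cong (R_w \otimes M \otimes R_{w^{-1}}) \otimes_R (R_w \otimes N \otimes R_{w^{-1}})$, using $R_{w^{-1}} \otimes_R R_w \cong R$ to cancel the middle factors, and this isomorphism is natural and compatible with the associativity and unit constraints.

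For the \emph{in particular} clause, I would simply note that the computation above shows the functor sends each generator $B'_r = R \otimes_{R^r} R$ of $\mathbb{S}\text{Bim}_{W'}$ to $B'_{wrw^{-1}}$, and by \corref{cor:s} the element $wrw^{-1}$ again lies in $S'$; therefore the functor maps the generating objects of $\mathbb{S}\text{Bim}_{W'}$ to generating objects of $\mathbb{S}\text{Bim}_{W'}$, and the same is true of the quasi-inverse (conjugation by $w^{-1}$, with $w^{-1} \in W_{S_0}$ as well). Restricting the established global equivalence to this subcategory therefore yields a self-equivalence of $\mathbb{S}\text{Bim}_{W'}$.

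The main obstacle I anticipate is not any single hard step but the careful bookkeeping needed to make the cancellation $R_{w^{-1}} \otimes_R R_w \cong R$ function correctly inside tensor products and to confirm that the resulting natural isomorphisms are genuinely coherent (satisfy the monoidal functor axioms) rather than merely object-wise isomorphisms; since all the relevant bimodules are free and finitely generated over $R$, this coherence can be pinned down by tracking the action on the explicit $R$-bases, but one must take care that the Karoubi envelope does not introduce idempotent-completion subtleties — these are handled automatically because an additive monoidal equivalence of the underlying $\mathbb{BS}\text{Bim}^L$ extends uniquely to its Karoubi envelope $\mathbb{S}\text{Bim}^L$.
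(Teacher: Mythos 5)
Your proof is correct and takes the same route as the paper: the paper dismisses the first statement as obvious (conjugation by the invertible object $R_w$, which visibly sends products of the generators $R_s$, $B_{s'}$ to products of generators, with quasi-inverse given by conjugation by $R_{w^{-1}}$) and derives the second from Corollary~\ref{cor:s}, exactly the two points you spell out in detail via Lemma~\ref{lem:iso} and Proposition~\ref{prop:S'}. Your version simply makes explicit what the paper leaves implicit.
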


\begin{proof}
The first statement is obvious. The second statement follows from the fact that the conjugation action of $w$ preserves the set $S'$. 
\end{proof}

\begin{lem}\label{lem:decom}
We have the following equivalence of additive categories
\[
 \mathbb{S}\text{Bim}^L \cong \bigoplus_{w \in W_{S_0}} R_{w} \otimes  \mathbb{S}\text{Bim}_{W'} \cong \bigoplus_{w \in W_{S_0}} \mathbb{S}\text{Bim}_{W'} \otimes R_w.
\] 
\end{lem}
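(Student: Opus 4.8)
The plan is to exhibit an additive functor from the right-hand category onto $\mathbb{S}\text{Bim}^L$ and prove it is an equivalence. Concretely, I would consider
\[
F \colon \bigoplus_{w \in W_{S_0}} \mathbb{S}\text{Bim}_{W'} \longrightarrow \mathbb{S}\text{Bim}^L,
\]
sending an object $X$ placed in the $w$-th summand to $X \otimes R_w$, with no morphisms between distinct summands. The first task is essential surjectivity. I would show by induction on the length of $\underline w$ that every Bott--Samelson object $B^L_{\underline w}$ is isomorphic to $X \otimes R_g$ with $X \in \mathbb{S}\text{Bim}_{W'}$ and $g \in W_{S_0}$. If the last letter of $\underline w$ lies in $S_0$ it is absorbed into the standard factor via $R_{g'} \otimes_R R_s \cong R_{g's}$; if the last letter is $s \in S_1$, I rewrite $R_{g'} \otimes_R B_s \cong B'_{g'sg'^{-1}} \otimes_R R_{g'}$, which follows from \lemref{lem:iso} after tensoring by the invertible bimodule $R_{g'}$ on the right, the new index $g'sg'^{-1}$ lying in $S'$ by \propref{prop:S'}. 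Passing to Karoubi envelopes and using that $\mathbb{S}\text{Bim}_{W'}$ is a full, idempotent-closed subcategory while $-\otimes R_{g^{-1}}$ is an equivalence, every object of $\mathbb{S}\text{Bim}^L$ is again of the form $X\otimes R_g$, so $F$ is essentially surjective.

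For full faithfulness I would compare Hom-spaces summand by summand. On the diagonal, $\operatorname{Hom}_{\mathbb{S}\text{Bim}^L}(X\otimes R_g, Y\otimes R_g) \cong \operatorname{Hom}_{\mathbb{S}\text{Bim}^L}(X,Y)$ because $-\otimes R_{g^{-1}}$ is an equivalence (\lemref{lem:equiv}), and this equals $\operatorname{Hom}_{\mathbb{S}\text{Bim}_{W'}}(X,Y)$ since $\mathbb{S}\text{Bim}_{W'}$ is a full subcategory. The crucial point is off-diagonal vanishing: I must show $\operatorname{Hom}_{\mathbb{S}\text{Bim}^L}(X\otimes R_g, Y\otimes R_h) = 0$ for $g\neq h$. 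Here I would use the standard and costandard filtrations that exist for all objects of $\mathbb{S}\text{Bim}^L$: the standard filtration of $X$ has subquotients $R_{x}[\nu]$ with $x\in W'$, so that of $X\otimes R_g$ is supported on the coset $W'g$, and likewise $Y\otimes R_h$ is supported on $W'h$. Since morphisms between objects whose standard and costandard supports are disjoint must vanish, it suffices to see $W'g\neq W'h$, i.e.\ $gh^{-1}\notin W'$.

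This reduces to the group-theoretic fact $W_{S_0}\cap W' = \{e\}$, which I expect to be the main obstacle and would isolate as a preliminary claim. Suppose $u\in W_{S_0}\cap W'$ with $u\neq e$. As an element of the parabolic subgroup $W_{S_0}$, its inversion set $\Phi^+\cap u(\Phi^-)$ consists only of positive roots of the parabolic root subsystem, and the reflection through any such root lies in $T_0$. On the other hand, as a nontrivial element of the Coxeter group $(W',S')$ its length $n_1(u)\geq 1$ by \propref{prop:Dyer}, so $\Phi_1^+\cap u(\Phi_1^-)$ is nonempty; any $\alpha$ in it lies in $\Phi^+\cap u(\Phi^-)$, while the reflection through $\alpha\in\Phi_1$ lies in $T_1$. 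Thus this single reflection would lie in $T_0\cap T_1=\emptyset$, a contradiction. Hence distinct $g,h\in W_{S_0}$ give distinct cosets, the off-diagonal Hom-spaces vanish, and $F$ is an equivalence realising the middle isomorphism.

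Finally, the right-hand isomorphism follows formally: for $w\in W_{S_0}$ the conjugation equivalence of \lemref{lem:equiv} gives $R_w\otimes X \cong (R_w\otimes X\otimes R_{w^{-1}})\otimes R_w$ with $R_w\otimes X\otimes R_{w^{-1}}\in \mathbb{S}\text{Bim}_{W'}$, so the subcategories $R_w\otimes\mathbb{S}\text{Bim}_{W'}$ and $\mathbb{S}\text{Bim}_{W'}\otimes R_w$ coincide and the two direct-sum descriptions agree.
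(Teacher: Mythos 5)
Your proof is correct, and its first half is the paper's entire proof: the paper simply observes (via \lemref{lem:equiv}) that every tensor product of the generators $R_s$ ($s\in S_0$) and $B_{s'}$ ($s'\in S_1$) can be rewritten so as to land in $R_w\otimes\mathbb{S}\text{Bim}_{W'}$ for some $w\in W_{S_0}$, and that these subcategories are closed under direct summands; this is exactly your inductive rewriting using \lemref{lem:iso} and \propref{prop:S'}. Where you genuinely go beyond the paper is in justifying that the sum is \emph{direct}: the paper never addresses why there are no nonzero morphisms between $X\otimes R_g$ and $Y\otimes R_h$ for $g\neq h$ (equivalently, why an idempotent on a sum of Bott--Samelson objects is automatically block-diagonal), whereas you reduce this to the disjointness of the cosets $W'g$ via supports of standard filtrations, and then to the group-theoretic fact $W_{S_0}\cap W'=\{e\}$, which you prove cleanly from $n_1(u)\geq 1$ and $T_0\cap T_1=\emptyset$. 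That claim and its proof are correct and are a real addition; the only cosmetic issue is one of ordering, since the off-diagonal Hom-vanishing is already needed to conclude that an arbitrary object of the Karoubi envelope (a summand of a sum of Bott--Samelsons spread over several cosets) splits into pieces lying in single summands, so it should logically precede the final step of your essential-surjectivity argument rather than follow it. In short: same mechanism as the paper for the rewriting step, plus a worthwhile completion of a point the paper leaves implicit.
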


\begin{proof}
Thanks to Lemma~\ref{lem:equiv}, any tensor product of $R_s (s \in S_0)$ and $B_{s'} (s' \in S_1)$ lies in $R_{w} \otimes  \mathbb{S}\text{Bim}_{W'}$ (or  $\mathbb{S}\text{Bim}_{W'} \otimes R_w$) for some $w \in W_{S_0}$, which is clearly closed under taking direct summands. 
\end{proof}

\begin{prop}\label{prop:1}
There is a unique isomorphism of $\mA$-algebras 
\begin{align*}
	\varepsilon: \mH & \longrightarrow [\mathbb{S}\text{Bim}^L],\\
					c_s & \longmapsto 
						\begin{cases}
							B_s, &\text{ if } s \in S_1;\\
							R_s, &\text{ if }s \in S_0.
						\end{cases} 
\end{align*}
The inverse of $\varepsilon$ is given by the character map $ch : [\mathbb{S}\text{Bim}^L] \rightarrow \mH$. 

For any $w \in W$, there exists a unique indecomposable bimodule (up to isomorphism) $B_w^L$ which occurs as a summand of $B_{\underline{w}}^L$ for any reduced expression $\underline{w}$ of $w$ such that $R_w[\nu]$ occurs in its standard filtration. The set $\{B_w^L [\nu] \vert w \in W, \nu \in \Z\}$ gives a complete list of indecomposable bimodules in $\mathbb{S}\text{Bim}^L$.
\end{prop}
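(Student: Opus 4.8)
The plan is to leverage the decomposition established in Lemma~\ref{lem:decom} together with Soergel's theory for the reflection subgroup $W'$ recorded in Proposition~\ref{prop:W'}. The key structural input is that $\mathbb{S}\text{Bim}^L$ splits as $\bigoplus_{g \in W_{S_0}} R_g \otimes \mathbb{S}\text{Bim}_{W'}$, so every object is a direct sum of bimodules of the form $R_g \otimes M$ with $g \in W_{S_0}$ and $M \in \mathbb{S}\text{Bim}_{W'}$. Since $R_g$ is invertible (with inverse $R_{g^{-1}}$), tensoring with $R_g$ is an equivalence by Lemma~\ref{lem:equiv}; in particular $R_g \otimes M$ is indecomposable if and only if $M$ is. The main work is therefore to transport the classification of indecomposables and the Grothendieck-group statement from $\mathbb{S}\text{Bim}_{W'}$ to $\mathbb{S}\text{Bim}^L$.

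First I would establish the isomorphism $\varepsilon$ at the level of Grothendieck groups. By Theorem~\ref{thm:rho} the embedding $\rho : \mH' \hookrightarrow \mH$ identifies $\mH'$ with the subalgebra generated by the $T_r$ for $r \in S'$, and by Corollary~\ref{cor:scw} left and right multiplication by $T_g$ ($g \in W_{S_0}$) is invertible on $\mH$ with $T_g c_w = c_{gw}$. On the categorical side $[\mathbb{S}\text{Bim}^L]$ is, via Lemma~\ref{lem:decom}, a free module over $[\mathbb{S}\text{Bim}_{W'}]$ with basis the classes $[R_g]$, and these classes satisfy $[R_g][R_{g'}] = [R_{gg'}]$ matching the relations $T_g T_{g'} = T_{gg'}$ in $\mH$ for $g, g' \in W_{S_0}$ (using $T_s^2 = 1$ for $s \in S_0$). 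I would then define $\varepsilon$ by combining the already-constructed isomorphism $\mH' \xrightarrow{\sim} [\mathbb{S}\text{Bim}_{W'}]$ of Proposition~\ref{prop:W'} with the assignment $T_g \mapsto [R_g]$, and check that it is a well-defined $\mA$-algebra homomorphism sending $c_s$ to $B_s$ (for $s \in S_1$) and to $R_s$ (for $s \in S_0$). That $ch$ is a two-sided inverse follows because $ch$ is multiplicative and already inverts $\varepsilon$ on $\mathbb{S}\text{Bim}_{W'}$ by Proposition~\ref{prop:W'}, while $ch(R_g) = T_g$ is immediate from the definition of the standard filtration.

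Next I would classify the indecomposables. Every indecomposable object of $\mathbb{S}\text{Bim}^L$ lies in a single summand $R_g \otimes \mathbb{S}\text{Bim}_{W'}$ of the decomposition, so by the equivalence $R_g \otimes (-)$ it has the form $R_g \otimes B'_{w'}[\nu]$ for a unique $w' \in W'$, $\nu \in \Z$, by the classification in Proposition~\ref{prop:W'}. For arbitrary $w \in W$ I would write $w = g w'$ with $g \in W_{S_0}$ and $w' \in W'$ (the existence of such a factorization, and that $n_1(w')$ records the $S_1$-content, is exactly the content of Proposition~\ref{prop:Dyer} and Proposition~\ref{prop:S'}), and then define $B^L_w := R_g \otimes B'_{w'}$. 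Using Lemma~\ref{lem:Bruhat} and Corollary~\ref{cor:scw}, the standard filtration of $R_g \otimes B'_{w'}$ has leading term $R_{gw'} = R_w$ with the correct degree, and the lower terms $R_y[\nu]$ satisfy $y < w$, so $B^L_w$ is the claimed indecomposable summand of $B^L_{\underline{w}}$ for any reduced expression. Uniqueness of the factorization $w = gw'$ guarantees that distinct $w$ give non-isomorphic $B^L_w$, yielding the complete list.

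The main obstacle I anticipate is the factorization $w = g w'$ with $g \in W_{S_0}$, $w' \in W'$, and the verification that it is compatible with both the standard filtration and the Bruhat order so that $R_w$ (rather than some other $R_y$) is the leading term of the standard filtration of $B^L_w$. One must confirm that the reduced-expression bookkeeping for $\underline{w}$ decomposes cleanly into the $S_0$-part and the $W'$-part, so that $B^L_{\underline{w}}$ indeed lands in the summand $R_g \otimes \mathbb{S}\text{Bim}_{W'}$ indexed by the correct $g$; here Lemma~\ref{lem:Bruhat} is the crucial tool, ensuring that conjugating and multiplying by elements of $W_{S_0}$ does not disturb the relevant length and order comparisons. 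Once that compatibility is secured, everything else is a formal transport along the equivalence, and the uniqueness and completeness assertions follow from the corresponding statements in Proposition~\ref{prop:W'}.
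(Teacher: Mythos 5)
Your treatment of the classification of indecomposables is essentially the paper's: both reduce via Lemma~\ref{lem:decom} and the equivalences $R_g \otimes (-)$ of Lemma~\ref{lem:equiv} to the Soergel classification for $(W',S')$ in Proposition~\ref{prop:W'}, writing $w = gw'$ with $g \in W_{S_0}$, $w' \in W'$ and setting $B^L_w = R_g \otimes B'_{w'}$. That part is fine.

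The gap is in the claim that $\varepsilon$ is an algebra homomorphism. You propose to ``combine'' the isomorphism $\mH' \to [\mathbb{S}\text{Bim}_{W'}]$ with $T_g \mapsto [R_g]$ and then ``check'' well-definedness, but the relations internal to $W_{S_0}$ (which you verify via $[R_g][R_{g'}]=[R_{gg'}]$) and the relations internal to $W'$ (covered by Proposition~\ref{prop:W'}) do not by themselves present $\mH$: the defining presentation of $\mH$ includes the mixed braid relation $T_sT_rT_s\cdots = T_rT_sT_r\cdots$ ($m_{s,r}$ factors, necessarily even) for $s \in S_0$, $r \in S_1$, and nothing in your argument verifies that its image holds in $[\mathbb{S}\text{Bim}^L]$. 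Equivalently, your implicit claim that $\mH$ is the crossed product of $\mH'$ by $W_{S_0}$ (so that checking the two internal families of relations plus $[R_g][B'_r][R_{g^{-1}}]=[B'_{grg^{-1}}]$ suffices) is itself a statement that needs proof, and proving it amounts to the same mixed-relation verification. This is precisely the heart of the paper's proof: one reduces to the dihedral case $S=\{s,r\}$ with $L(s)=0$, $L(r)=1$, uses $T_s^2=1$ to rewrite the $m_{s,r}$-factor braid relation as the $m_{s,r}/2$-factor braid relation between $T_r$ and $T_{srs}$ inside the reflection subgroup $W'=\langle r, srs\rangle$, and then invokes Proposition~\ref{prop:W'} together with the isomorphism $R_s \otimes_R B_r \otimes_R R_s \cong R\otimes_{R^{srs}}R$ of Lemma~\ref{lem:iso}. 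Without this step (or an equivalent verification of the crossed-product presentation), the existence of $\varepsilon$ has not been established.
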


\begin{proof}The statement about indecomposable bimodules follows from Lemma~\ref{lem:decom} and Proposition~\ref{prop:W'}. Note that we have $B_s \cong B_s^L$ for $s \in S_1$. Let us prove $\varepsilon$ is an algebra homomorphism.

It suffices to consider the case when $W$ is a dihedral group with $S= \{s, r\}$. We can assume $L(s) =0$ and $L(r) =1$. The other two cases is either trivial or follows from \cite{Soe07}. The quadratic relations are obvious. We prove the braid relation 
\[T_{r} T_s T_r T_s \cdots = T_s T_r T_s T_r \cdots ,
\]
where both products have (necessarily even) ${m_{r,s}}$ factors.

Let us first consider the subalgebra  $\mc{H}'$ generated by $T_{srs} = T_s T_r T_s$ and $T_{r}$. Note that $L(r) = L(srs) =1$. Using the relation $T_{s}^2 =1$, we rewrite the braid relation as
\[
T_{srs} T_{r} T_{srs} T_{r} \cdots = T_{r} T_{srs} T_{r} T_{srs} \cdots,
\]
where both products have $m_{srs, r} = {m_{r,s}}/{2}$ factors. Thanks to Proposition~\ref{prop:W'}, we have the algebra homomorphism 
\begin{align*}
	\varepsilon: \mH' & \longrightarrow  [\mathbb{S}\text{Bim}_{W'}] \hookrightarrow [\mathbb{S}\text{Bim}^L],\\
					T_r+v & \longmapsto [B_r\rq{}] \cong [B_r] ;\\
					T_{srs} +v & \longmapsto [ R \otimes_{R^{srs}}R].
\end{align*}
Thanks to Lemma~\ref{lem:iso}, we have $R_{s} \otimes_R B_{r} \otimes_R R_s \cong R \otimes_{R^{srs}}R$. This finishes the proof that $\varepsilon$ is an algebra homomorphism. 
\end{proof}
The following corollary can be regarded as the categorical analog of Corollary~\ref{cor:scw}.
\begin{cor}\label{cor:sw}
For $g \in W_{S_0}$, we have $R_{g} \otimes B_w^L \cong B_{gw}^L$ and $B_w^L \otimes R_g \cong B_{wg}^L$ in $\mathbb{S}\text{Bim}^L$.
\end{cor}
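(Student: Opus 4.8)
The plan is to prove Corollary~\ref{cor:sw} by combining the categorical decomposition of Lemma~\ref{lem:decom} with the uniqueness characterization of the indecomposable objects $B^L_w$ from Proposition~\ref{prop:1}. Since the two claimed isomorphisms are symmetric, I would focus on establishing $R_g \otimes B^L_w \cong B^L_{gw}$ for $g \in W_{S_0}$ and $w \in W$, and then remark that the right-handed statement follows by an entirely parallel argument (or by applying the analogous right-tensoring version of Lemma~\ref{lem:decom}).

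First I would observe that tensoring on the left by the invertible standard bimodule $R_g$ is an autoequivalence of $\mathbb{S}\text{Bim}^L$ (its inverse being $R_{g^{-1}} \otimes -$), so in particular it sends indecomposable objects to indecomposable objects. Thus $R_g \otimes B^L_w$ is indecomposable, and by Proposition~\ref{prop:1} it must be isomorphic to some $B^L_{u}[\nu]$ for a unique $u \in W$ and $\nu \in \Z$. To identify $u$ and $\nu$, I would pass to the Grothendieck group and use the character map $ch$, which is the inverse of $\varepsilon$: in $[\mathbb{S}\text{Bim}^L]$ we have $[R_g] = T_g$ (since $R_g$ is the standard bimodule, its character is $T_g$, and for $g \in W_{S_0}$ the relation $\overline{T_g} = T_g$ shows $c_g = T_g$), so
\[
ch\bigl(R_g \otimes B^L_w\bigr) = T_g \cdot c_w = c_{gw},
\]
where the last equality is precisely Corollary~\ref{cor:scw}. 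Since $ch$ is a ring isomorphism sending the indecomposable $B^L_u$ to $c_u$ (up to the degree shift recorded by $\nu$), the equality $ch(R_g \otimes B^L_w) = c_{gw} = ch(B^L_{gw})$ forces $u = gw$ and $\nu = 0$, giving $R_g \otimes B^L_w \cong B^L_{gw}$.

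The only genuine subtlety I anticipate is the bookkeeping of the degree shift, namely confirming $\nu = 0$: one must check that $R_g$ carries no internal grading shift so that $[R_g] = T_g$ with no power of $v$, which is immediate from the definition $R_g \cong R$ with the twisted bimodule structure and no $[1]$-shift. Given that, the argument is purely formal. An alternative, perhaps cleaner, route would sidestep the Grothendieck group entirely: since $R_w[\nu]$ occurs in the standard filtration of $B^L_w$ (the defining property), and $R_g \otimes R_w \cong R_{gw}$ with $\ell(gw) = \ell(g) + \ell(w)$ for $g \in W_{S_0}$, the standard filtration of $R_g \otimes B^L_w$ contains $R_{gw}[\nu]$; combined with indecomposability and the uniqueness clause of Proposition~\ref{prop:1}, this directly identifies $R_g \otimes B^L_w$ with $B^L_{gw}$ without invoking the character computation. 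I would present the Grothendieck-group version as the main line since it is the most transparent, treating the standard-filtration remark as a confirmation.
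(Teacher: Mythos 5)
Your primary argument is circular within the paper's logical order. You identify $R_g \otimes B^L_w$ by computing $ch(R_g\otimes B^L_w) = T_g\cdot c_w = c_{gw}$ and then invoking the fact that $ch$ sends each indecomposable $B^L_u$ to $c_u$. But the statement $ch(B^L_u)=c_u$ is precisely Proposition~\ref{prop:2}, which comes \emph{after} Corollary~\ref{cor:sw} and whose proof explicitly uses Corollary~\ref{cor:sw} (together with Corollary~\ref{cor:scw}) to pass from $w\in W'$ to general $w\in W$. Proposition~\ref{prop:1} only tells you that $ch$ is a ring isomorphism inverse to $\varepsilon$ and computes it on the generators $c_s$; at this point in the paper it is not yet known that $ch(B^L_w)=c_w$, nor even that $ch(B^L_w)$ is bar-invariant. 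So the Grothendieck-group computation cannot serve as the main line here unless you first reprove Proposition~\ref{prop:2} by some independent route.

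The argument you relegate to a ``confirmation'' is in fact the paper's actual proof: $R_g\otimes -$ is an autoequivalence of $\mathbb{S}\text{Bim}^L$ (Lemma~\ref{lem:equiv}), hence $R_g\otimes B^L_w$ is indecomposable; $R_g\otimes R_w\cong R_{gw}$ shows that $R_{gw}[\nu]$ occurs in its standard filtration; and the uniqueness in Proposition~\ref{prop:1} then identifies it with $B^L_{gw}$. You should promote this to the main argument, with two repairs. First, your assertion that $\ell(gw)=\ell(g)+\ell(w)$ for all $g\in W_{S_0}$ is false (take $g=s\in S_0$ and $w=su$ reduced), but it is also unnecessary: $R_g\otimes R_w\cong R_{gw}$ holds for standard bimodules with no length hypothesis. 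Second, the characterization in Proposition~\ref{prop:1} singles out $B^L_{gw}$ among the summands of $B^L_{\underline{gw}}$ for reduced expressions $\underline{gw}$ of $gw$, so you still need to know that $R_g\otimes B^L_w$ arises as such a summand; the concatenation of reduced expressions of $g$ and $w$ need not be reduced, so this requires a word --- the cleanest way is via Lemma~\ref{lem:decom}, which exhibits every indecomposable of $\mathbb{S}\text{Bim}^L$ as $R_h\otimes B'_x[\nu]$ with $h\in W_{S_0}$, $x\in W'$, whence $R_g\otimes B^L_w\cong R_{gh}\otimes B'_x\cong B^L_{ghx}=B^L_{gw}$ after writing $w=hx$.
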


\begin{proof}
The first isomorphism follows from the equivalence $R_g \otimes - : \mathbb{S}\text{Bim}^L \rightarrow \mathbb{S}\text{Bim}^L$ and the characterization of $B_{gw}$. The second one is similar.
\end{proof}
%

Recall we have identified $\mathbb{S}\text{Bim}_{W'}$ with a full subcategory of $\mathbb{S}\text{Bim}^L$, thanks to the isomorphism of bimodules in Lemma~\ref{lem:iso}. It is straightforward that we have $B_w\rq{}  \cong B_w^L \in \mathbb{S}\text{Bim}^L$ for $w \in W\rq{}$.

%
%

\begin{cor}\label{cor:cdrho}
We have the following commutative diagrams of $\mA$-algebras
\[
\xymatrix{[\mathbb{S}\text{Bim}_{W'}] \ar[r] \ar[d]^{ch} & [\mathbb{S}\text{Bim}^L] \ar[d]^{ch}\\
\mH' \ar[r]^{\rho} & \mH
}
\]

\end{cor}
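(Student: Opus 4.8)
The plan is to exploit the fact that the character map is computed entirely from the standard filtration of a bimodule, and that this filtration is an intrinsic invariant of the bimodule which does not depend on whether we regard it as an object of $\mathbb{S}\text{Bim}_{W'}$ or of $\mathbb{S}\text{Bim}^L$. Writing $\iota$ for the map $[\mathbb{S}\text{Bim}_{W'}] \to [\mathbb{S}\text{Bim}^L]$ induced by the inclusion of $\mathbb{S}\text{Bim}_{W'}$ as a full monoidal subcategory, the square commutes precisely when $ch \circ \iota = \rho \circ ch$. Since all four maps are $\mA$-algebra homomorphisms and $[\mathbb{S}\text{Bim}_{W'}]$ is generated by the classes $[B'_r]$ with $r \in S'$, one could check this on generators; but because $ch$ is additive and built from standard filtrations, I would instead verify the identity directly on an arbitrary object, avoiding any multiplicativity bookkeeping.

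First I would recall from Proposition~\ref{prop:W'} that every object $M$ of $\mathbb{S}\text{Bim}_{W'}$, being a Soergel bimodule for the realization $(\mathfrak{h}, \{\alpha_r\}, \{\alpha^\vee_r\})$ of $(W', S')$, admits a standard filtration whose subquotients are of the form $R_w[\nu]$ with $w \in W'$ and $\nu \in \Z$. By definition the character map into $\mH'$ records the graded multiplicities of these subquotients, so that $ch([M]) = \sum_{w \in W'} p_w(v)\, T_w$ for suitable coefficients $p_w(v) \in \mA$ (here written with the generators of $\mH'$, whose primes we have agreed to suppress).

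The key step is the observation that $\iota$ leaves the standard filtration unchanged. The bimodule $M$ and all of its standard subquotients $R_w[\nu]$ are literally the same graded $R$-bimodules whether viewed in $\mathbb{S}\text{Bim}_{W'}$ or in $\mathbb{S}\text{Bim}^L$, the inclusion is full, and the grading on $R$ is the same in both settings; the essential uniqueness of the standard filtration then guarantees that passing to the larger category introduces no new subquotients and alters no degree shifts. Hence the character of $\iota(M)$ in $\mH$ is computed from the very same subquotients, now read through the standard basis $\{T_w \mid w \in W\}$ of $\mH$, yielding $ch(\iota([M])) = \sum_{w \in W'} p_w(v)\, T_w$ with identical coefficients $p_w(v)$.

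Finally I would apply $\rho$ to the first expression: by $\mA$-linearity of $\rho$ together with the identity $\rho(T_w) = T_w$ for $w \in W'$ from Theorem~\ref{thm:rho}, we obtain $\rho(ch([M])) = \sum_{w \in W'} p_w(v)\, T_w = ch(\iota([M]))$, which establishes the commutativity of the square. I do not expect a genuine obstacle here; the only point requiring care is the claim that the inclusion preserves standard subquotients and degree shifts, and this is guaranteed because $\mathbb{S}\text{Bim}_{W'}$ is by construction a full monoidal subcategory of $\mathbb{S}\text{Bim}^L$ realized on the same graded ring $R$.
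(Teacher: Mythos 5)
Your argument is correct and is essentially the one the paper intends: the paper states the corollary without proof, treating it as immediate from the facts you spell out, namely that $ch$ is read off from the standard filtration (an intrinsic invariant of the underlying graded $R$-bimodule, unchanged under the full inclusion $\mathbb{S}\text{Bim}_{W'} \hookrightarrow \mathbb{S}\text{Bim}^L$) and that $\rho(T_w) = T_w$ for $w \in W'$ by Theorem~\ref{thm:rho}. The only wording to tighten is the appeal to ``essential uniqueness of the standard filtration'': what you actually need is that the graded multiplicity of each $R_w[\nu]$ in a standard filtration is a well-defined invariant of the bimodule, which is the standard fact and suffices here.
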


We now assume that $(\mathfrak{h}, \{\alpha_s\}, \{\alpha^\vee_{s}\})$ ($s\in S$) is a realization of $(W,S)$ over $\mathbb{R}$, such that $(\mathfrak{h}, \{\alpha_r\}, \{\alpha^\vee_{r}\})$ ($r \in S'$) is also  realization of $(W',S')$. In this setting we can apply results from \cite{EW14}.

\begin{prop}\label{prop:2}
The map $\text{ch} : [\mathbb{S}\text{Bim}^L] \rightarrow \mH$ sends $[B_w^L]$ to $c_w$ for any $w \in W$.
\end{prop}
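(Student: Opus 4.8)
The plan is to reduce the statement to Soergel's theorem for $W'$ (\propref{prop:W'}) and then transport the answer across the parabolic $W_{S_0}$ using the tools already assembled. The starting structural observation is that $W'$ is a \emph{normal} subgroup of $W$, being generated by the conjugation-closed set $T_1$; moreover $W_{S_0}\cap W'=\{1\}$, since every element of $W_{S_0}$ has $n_1=0$ while (by \propref{prop:Dyer}) the only element of $W'$ of vanishing $W'$-length is the identity. As the images of $S_1$ are trivial in $W/W'$, the quotient is generated by the images of $S_0$, so the natural map $W_{S_0}\to W/W'$ is an isomorphism. Hence every $w\in W$ factors uniquely as $w=gw'$ with $g\in W_{S_0}$ and $w'\in W'$; this is exactly the decomposition underlying \lemref{lem:decom}.

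First I would establish the base case $w\in W'$. Here $B_w^L\cong B'_w$, as recorded just before the statement, and by \propref{prop:W'} the character map on $\mathbb{S}\text{Bim}_{W'}$ sends $B'_w$ to the canonical basis element $c_w\in\mH'$. Plugging this into the commutative square of \corref{cor:cdrho}, whose top arrow carries $B'_w$ to $B_w^L$, gives
\[
\text{ch}(B_w^L)=\rho\big(\text{ch}(B'_w)\big)=\rho(c_w).
\]
By \thmref{thm:rho} the canonical bases of $\mH'$ and $\mH$ agree on $W'$, i.e.\ $\rho(c_w)=c_w$, so $\text{ch}(B_w^L)=c_w$ for all $w\in W'$.

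Next I would bootstrap to an arbitrary $w=gw'$ via the $W_{S_0}$-action. By \corref{cor:sw} we have $B_w^L=B_{gw'}^L\cong R_g\otimes B_{w'}^L$. Since $\text{ch}$ is an algebra homomorphism (\propref{prop:1}) and $\text{ch}(R_g)=T_g$ — the standard bimodule $R_g$ being its own standard filtration — multiplicativity yields $\text{ch}(B_w^L)=T_g\cdot\text{ch}(B_{w'}^L)=T_g\,c_{w'}$ by the base case. Finally \corref{cor:scw} gives $T_g\,c_{w'}=c_{gw'}=c_w$, completing the argument.

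I expect the genuinely delicate point to be the base case rather than the bootstrap: one must check that the three inputs — the identification $B_w^L\cong B'_w$, Soergel's theorem for $\mathbb{S}\text{Bim}_{W'}$, and the compatibility of canonical bases through $\rho$ — fit together precisely as encoded in \corref{cor:cdrho}, which itself rests on the realization hypothesis assumed just before the statement so that the results of \cite{EW14} apply to $(W',S')$. The bootstrap is then essentially formal, its only content being the multiplicativity of $\text{ch}$ together with the compatibility $T_g c_{w'}=c_{gw'}$ from \corref{cor:scw}.
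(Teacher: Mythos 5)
Your proof is correct and follows essentially the same route as the paper: first the base case $w\in W'$ via \propref{prop:W'}, \thmref{thm:rho} and the commutative square of \corref{cor:cdrho}, then the extension to general $w$ by tensoring with $R_g$ for $g\in W_{S_0}$ and invoking \corref{cor:scw} and \corref{cor:sw}. The only cosmetic difference is that you make the semidirect-product factorization $w=gw'$ explicit, whereas the paper phrases the same bootstrap as a commutative square intertwining $R_s\otimes-$ with $T_s\cdot$ for $s\in S_0$.
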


\begin{proof}
Thanks to \cite{EW14}, we know the character map $ch: \mathbb{S}\text{Bim}_{W'} \rightarrow \mH'$ sends $B_{w}^L$ to $c_{w}$ for $w \in W'$. Then thanks to Theorem~\ref{thm:rho}, we know that $\rho \circ ch (B_{w}^L) = c_{w}$. Then by the commutative diagram in Corollary~\ref{cor:cdrho}, we see that $ch(B_{w}^L) = c_{w}$ for any $w \in W' \subset W$.

Hence thanks to the following commutative diagram ($s \in S_0$)
\[
\xymatrix{[\mathbb{S}\text{Bim}^L] \ar[r]^{R_{s} \otimes -} \ar[d]^{ch} & [\mathbb{S}\text{Bim}^L] \ar[d]^{ch}\\
\mH \ar[r]^{T_{s} \cdot} & \mH
},
\]
and Corollary~\ref{cor:scw}\&~\ref{cor:sw}, we have $ch(B_{w}^L) = c_w$ for any $w \in W$.
\end{proof}



\end{document}